\newcommand{\Q}{\mathbb{Q}}
\newcommand{\C}{\mathbb{C}}
\newcommand{\R}{\mathbb{R}}
\newcommand{\N}{\mathbb{N}}
\newcommand{\dom}{\operatorname{dom}}
\newcommand{\ran}{\operatorname{ran}}
\newcommand{\norm}[1]{\left\| #1 \right\|}
\newcommand{\supp}{\operatorname{supp}}
\newtheorem{theorem}{Theorem}[section]
\newtheorem{lemma}[theorem]{Lemma}
\theoremstyle{definition}
\newtheorem{definition}[theorem]{Definition}
\theoremstyle{theorem}
\newtheorem{corollary}[theorem]{Corollary}
\theoremstyle{theorem}
\newtheorem{proposition}[theorem]{Proposition}
\theoremstyle{theorem}
\theoremstyle{theorem}
\theoremstyle{definition}
\theoremstyle{theorem}
\numberwithin{equation}{section}
\begin{document}
\title{The isometry degree of a computable copy of $\ell^p$}
\author{Timothy H. McNicholl}
\address{Department of Mathematics\\
Iowa State University\\
Ames, Iowa 50011}
\email{mcnichol@iastate.edu}

\author{D. M. Stull}
\address{Department of Computer Science\\
Iowa State University\\
Ames, Iowa 50011}
\email{dstull@iastate.edu}
\thanks{Research of the first author supported in part by a Simons Foundation grant \# 317870.  Research of the second author supported in part by National Science Foundation Grants 1247051 and 1545028.}
\begin{abstract}
When $p$ is a computable real so that $p \geq 1$, we define the isometry degree of a computable presentation of $\ell^p$ to be the least powerful Turing degree $\mathbf{d}$ by which it is $\mathbf{d}$-computably isometrically isomorphic to the standard presentation of $\ell^p$.  
We show that this degree always exists and that when $p \neq 2$ these degrees are precisely the c.e. degrees.
\end{abstract}
\maketitle

\section{Introduction}\label{sec:intro}

Complexity of isomorphisms is a recurring theme of computable structure theory.  
For example, a computably presentable structure is \emph{computably categorical} if 
there is a computable isomorphism between any two of its computable presentations; 
it is \emph{$\Delta_n^0$-categorical} if there is a $\Delta_n^0$ isomorphism between any two of 
its computable copies.  The \emph{degree of categoricity} of a computable structure
is the least powerful oracle that computes an isomorphism between any two of its computable copies \cite{Fokina.Kalimullin.Miller.2010}.  There is at this time no characterization of the degrees of categoricity.  Partial results can be found in \cite{Anderson.Csima.2016}, \cite{Csima.Franklin.Shore.2013}, and \cite{Fokina.Kalimullin.Miller.2010}.

Throughout most of its development, computable structure theory has focused on countable structures. 
However, there has recently emerged a program to apply the concepts of computable structure theory to the uncountable structures commonly encountered in analysis such as metric spaces and Banach spaces.  For example, A.G. Melnikov has shown that $C[0,1]$ is not computably categorical as a metric space \cite{Melnikov.2013}, and  Melnikov and Ng have shown that $C[0,1]$ is not computably categorical as a Banach space \cite{Melnikov.Ng.2014}.  In their seminal text, Pour-El and Richards proved that $\ell^1$ is not computably categorical and that $\ell^2$ is computably categorical (though the results were not framed in the language of computable structure theory) \cite{Pour-El.Richards.1989}.  In 2013 Melnikov asked if $\ell^p$ is computably categorical for any values of $p$ besides $2$ \cite{Melnikov.2013}.  In 2015, the first author answered this question in the negative and later showed that $\ell^p$ is $\Delta^0_2$-categorical whenever $p$ is a 
computable real so that $p \geq 1$ \cite{McNicholl.2017}, \cite{McNicholl.2015}.

Here we put forward the study of a new notion: the \emph{degree of isomorphism} for a pair $(\mathcal{A}^\#, \mathcal{A}^+)$ of computable presentations of a structure $\mathcal{A}$; this is defined to be the least powerful oracle that computes an isomorphism of $\mathcal{A}^\#$ onto $\mathcal{A}^+$.  This notion fits in with the general theme of studying complexity of isomorphisms and is a local version of the concept of degree of categoricity.   
If among all computable presentations of $\mathcal{A}$ one is regarded as standard, then we define the isomorphism degree of a single computable presentation $\mathcal{A}^\#$ of $\mathcal{A}$ to be the least powerful oracle that computes an isomorphism of the standard presentation with $\mathcal{A}^\#$.  

We propose to study degrees of isomorphism in the context of the new intersection of computable
structure theory and computable analysis, specifically with regard to computable copies of $\ell^p$.  
So, whenever $(\ell^p)^\#$ is a computable presentation of $\ell^p$, we define the isometry degree of $(\ell^p)^\#$ to be the least powerful Turing degree that computes a linear isometry of the standard presentation of $\ell^p$ onto $(\ell^p)^\#$.

It is not obvious that degrees of isomorphism always exist.  For example, R. Miller has produced a computable structure with no degree of computable categoricity \cite{Miller.2009}.  We are thus pleasantly surprised to find that computable presentations of $\ell^p$ always have an isometry degree and that we can say precisely what these degrees are.  Specifically, we prove the following two theorems.

\begin{theorem}\label{thm:main.1}
When $p$ is a computable real so that $p \geq 1$, 
every computable presentation of $\ell^p$ has a degree of isometry, and this degree is c.e..
\end{theorem}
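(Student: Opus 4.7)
The overall strategy is to associate to each computable presentation $(\ell^p)^\#$ a specific c.e.\ set $A$ such that $A$ both computes a surjective linear isometry from the standard presentation onto $(\ell^p)^\#$ and is computable from every such isometry; this simultaneously gives existence of the isometry degree and its c.e.\ nature. The case $p=2$ is immediate from the computable categoricity of $\ell^2$ (Pour--El and Richards), so I would assume $p \neq 2$. By the Banach--Lamperti theorem, any surjective linear isometry $T : \ell^p \to (\ell^p)^\#$ is determined by the images $b_n = T(e_n)$, which must form a sequence of pairwise $p$-disjoint unit vectors whose closed linear span is all of $(\ell^p)^\#$. Constructing $T$ thus reduces to producing such a disjoint spanning basis in $(\ell^p)^\#$; any two such bases differ only by a permutation composed with scalars of modulus one, so the isometry is essentially unique up to symmetries whose specification is trivially computable.

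Let $(x_n)$ denote the distinguished sequence of $(\ell^p)^\#$. I would take $A$ to enumerate codes of tuples $(n,k,\sigma)$ where $\sigma$ specifies rational linear combinations $v_1, \ldots, v_n$ of $x_0, \ldots, x_N$ satisfying the strict approximate conditions $\bigl| \|v_i\| - 1 \bigr| < 2^{-k}$, $\bigl| \|v_i + v_j\|^p - 2 \bigr| < 2^{-k}$ for $i \neq j$, and $\operatorname{dist}(x_m, \operatorname{span}(v_1, \ldots, v_n)) < 2^{-k}$ for $m < n$. Each of these conditions is $\Sigma^0_1$ uniformly in $\sigma$ because the relevant norms are uniformly computable reals, so $A$ is c.e. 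To extract an $A$-computable isometry I would iteratively select, across increasing precisions $k$, nested tuples whose vectors refine one another; the norm limit of the resulting $v_i$ yields the desired $b_i$. That suitable tuples exist at every precision follows from the $\Delta^0_2$-categoricity of $\ell^p$ proved in \cite{McNicholl.2016}.

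For the reverse direction, one shows every isometry $T$ computes $A$. Membership in $A$ is a conjunction of strict inequalities between computable reals and rationals; given $T$, any candidate $v_i$ approximately satisfying the $p$-disjoint unit vector rigidity must be close, up to a unit scalar, to some $b_{\phi(i)} = T(e_{\phi(i)})$, and $T$-computable approximations to $b_n$ suffice to verify or refute closeness with any strict positive margin. The principal obstacle I anticipate lies in the forward direction: arranging the coding of $A$ so that the extracted sequence converges \emph{effectively} in norm rather than only in some weaker sense. This will likely require building monotonicity into the defining conditions --- for instance, requiring successive tuples to extend one another within a uniformly controlled shift --- and invoking the rigidity of $p$-disjoint unit vector systems to pin down a common limit.
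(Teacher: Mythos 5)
Your reduction to the case $p \neq 2$ and your appeal to Banach--Lamperti rigidity are fine, but the central object of your proof --- the c.e.\ set $A$ of approximately unit, approximately disjoint, approximately spanning rational tuples --- cannot play the role you assign it, in either direction. In the forward direction you propose to extract from $A$ a nested refining sequence of tuples and take norm limits; the difficulty you flag at the end is not a technicality but the entire problem. A tuple can satisfy your conditions at precision $2^{-k}$ while lying at distance bounded away from every genuine system of atoms: in the copies built in Section \ref{sec:proof.thm.2.part.1}, the vector $2^{-1/p}(e_{2m}+e_{2m+1})$ passes every finite-precision test for being a unit atom until the distinguished vector $e_{2m}$ (whose index of appearance is governed by an incomputable c.e.\ set) forces it to split, so a greedy refinement that commits to such an entry reaches a dead end and must backtrack. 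Managing that backtracking is a $\emptyset'$-construction, which only re-proves the known $\Delta^0_2$-categoricity and yields no least degree. In the reverse direction, membership in $A$ is a conjunction of strict inequalities between reals that are computable from the presentation alone; an isometry oracle lets you verify $a<b$ when it holds but gives no way to refute it when $a=b$, and since the isometry degree can be low you cannot appeal to $\emptyset'$ either, so the claim that every isometry computes $A$ is unsupported and in general false. (A smaller point: the correct effective test for disjoint support is the Hanner--Lamperti condition $\sigma_0(f,g)=0$, which involves both $\norm{f+g}_p$ and $\norm{f-g}_p$; the single condition $\norm{v_i+v_j}_p^p\approx 2$ does not characterize disjointness.)

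What your proposal is missing is the paper's identification of exactly which non-computable information a surjective isometry must carry. The paper fixes a computable disintegration $\phi$ of $(\ell^p)^\#$, partitions its domain into uniformly computable almost norm-maximizing chains $C_n$, and sets $g_n=\inf\phi[C_n]$; these $g_n$ are the atoms of the copy, and Lemmas \ref{lm:compute.isom} and \ref{lm:compute.seq} show that computing an isometric isomorphism is Turing-equivalent to computing the single sequence of reals $\{\norm{g_n}_p\}_{n\in\N}$, which is uniformly right-c.e.\ but not computable. The second essential ingredient is the Compression Theorem (Theorem \ref{thm:compression}), which packages this sequence into one right-c.e.\ real $r$ whose left Dedekind cut is enumeration-equivalent to the join of the left cuts of the $\norm{g_n}_p$; the Turing degree of that cut is then the (c.e.) degree of isometry. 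Your set $A$ is not a substitute for either step.
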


\begin{theorem}\label{thm:main.2}
When $p$ is a computable real so that $p \geq 1$ and $p \neq 2$, 
the isometry degrees of the computable presentations of $\ell^p$ 
are precisely the c.e. degrees.
\end{theorem}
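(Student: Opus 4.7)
By Theorem~\ref{thm:main.1}, every isometry degree of a computable presentation of $\ell^p$ is c.e., so the remaining task is the converse: for each c.e.\ set $A$ with a fixed computable enumeration $(A_s)_s$, construct a computable presentation $(\ell^p)^\#$ of $\ell^p$ whose isometry degree equals the Turing degree of $A$.

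The plan is to parametrize by $A$ the construction used to show that $\ell^p$ with $p\neq 2$ is not computably categorical: present $(\ell^p)^\#$ by a generating sequence $(v_n)_n$ of vectors in $\ell^p$ whose expansions in the standard atomic basis of $\ell^p$ depend on $A$, but whose norms on rational linear combinations do not. Concretely, the coefficients in the expansions of the $v_n$'s would be defined as computable limits of approximations driven by $(A_s)_s$, and chosen so that cancellation makes every norm $\|q_1 v_{n_1} + \cdots + q_k v_{n_k}\|$ reduce to a quantity computable uniformly without reference to $A$, making $(\ell^p)^\#$ a bona fide computable presentation. Given $A$ as oracle, one then traces the enumeration to determine each coefficient exactly, solves for the standard atoms of the underlying $\ell^p$ as rational-linear-combination Cauchy sequences in the $v_n$'s, and thereby produces an $A$-computable isometry from the standard $\ell^p$ onto $(\ell^p)^\#$; this gives the upper bound on the isometry degree.

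The hard direction---that every isometry $T \colon \ell^p \to (\ell^p)^\#$ already computes $A$---rests on Lamperti's theorem, which is exactly why the hypothesis $p \neq 2$ is essential. By that theorem, $T$ must send each standard basis vector $e_n$ to a unit scalar multiple of some atom of $(\ell^p)^\#$, so any oracle computing $T$ computes these atoms as rational-linear-combination Cauchy sequences in the $v_n$'s. The construction would then be arranged so that deciding membership $n \in A$ reduces to checking finitely many numerical inequalities on the coefficients in the atomic expansion of $T(e_n)$, giving $A \leq_T T$.

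The principal technical obstacle is calibrating the $(v_n)$-expansions so that $A$-dependent coefficients are invisible to finite rational norm computations---making the presentation computable---yet unavoidable in the rigid atomic decomposition extracted via Lamperti's theorem, thereby forcing every isometry to compute $A$. The delicate part is designing the cancellations in the norm function so that the $A$-dependent data drops out at every finite stage, while enough of it survives in the infinite-dimensional atomic structure to be detectable from any Cauchy-name presentation of the atoms.
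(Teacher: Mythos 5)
Your outline of the overall strategy is right: the backward direction does follow from Theorem~\ref{thm:main.1}, and for the forward direction one must build, for each c.e.\ set $A$, a computable presentation whose isometry degree is $\deg(A)$, with the rigidity theorem of Banach--Lamperti (Theorem~\ref{thm:classification}) supplying the lower bound. But the heart of the proof is the construction itself, and you have not given it; your last paragraph explicitly defers ``calibrating the $(v_n)$-expansions so that $A$-dependent coefficients are invisible to finite rational norm computations'' as ``the principal technical obstacle.'' That obstacle \emph{is} the theorem. Without exhibiting the vectors $v_n$ and verifying (i) that their closed linear span is isometrically isomorphic to $\ell^p$, (ii) that the norm of every rational combination is computable, and (iii) that recovering the atomic decomposition from any surjective linear isometry decides $A$, nothing has been proved.

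Moreover, the mechanism you envision --- coefficients that genuinely depend on $A$ but whose dependence ``cancels'' in all finite norm computations --- is not how the known construction works, and it is far from clear it can be realized: if every norm $\|q_1v_{n_1}+\cdots+q_kv_{n_k}\|_p$ is computable uniformly from the rational coefficients, it takes an argument to see why any noncomputable information survives at all. The paper's construction hides $A$ elsewhere. Taking a one-to-one effective enumeration $\{c_n\}$ of $C=A$, it sets $R(2k)=e_{2k}+e_{2k+1}$ and $R(2j+1)=e_{2c_j}$ and lets $\mathcal{B}$ be the closed linear span. Every generating vector is a \emph{computable} vector of the standard $\ell^p$, so computability of the presentation is immediate --- no cancellation is needed. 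What is noncomputable is the set of atoms of the subvector ordering of $\mathcal{B}$: the ``molecule'' $e_{2k}+e_{2k+1}$ is an atom of $\mathcal{B}$ exactly when $k\notin C$, and splits into two atoms exactly when $k\in C$. Theorem~\ref{thm:pres.disj.support} forces any isometry $T_0$ of $\ell^p$ onto $\mathcal{B}$ to carry $e_j$ to unit atoms of $\mathcal{B}$, so from $T_0$ one decides $n\in C$ by searching in parallel for $n$ to enter $C$ or for a $j$ with $\sigma_0(T_0(e_j),e_{2n})>0$ and $\sigma_0(T_0(e_j),e_{2n+1})>0$ (certifying that $\{2n,2n+1\}\subseteq\supp(T_0(e_j))$, hence $n\notin C$); conversely $C$ computes an isometry by enumerating the atoms of $\mathcal{B}$ directly and invoking Proposition~\ref{prop:unique.linear}. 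You should either supply a concrete construction realizing your cancellation scheme, or adopt this atom-splitting construction; as it stands the proposal is a plan, not a proof.
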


One direction of Theorem \ref{thm:main.2} is already known; namely that every c.e. degree is an isometry degree  \cite{McNicholl.2015}.  However, we give a new proof which we believe is simpler and more intuitive. 

The paper is organized as follows.  Sections \ref{sec:back} and \ref{sec:prelim} cover background and preliminaries 
from functional analysis and computable analysis.  Section \ref{sec:compression} contains a required result on the complexity of uniformly right-c.e. sequences of reals which is perhaps interesting in its own right.  Section \ref{sec:proof.thm.2.part.1} contains the new proof that, when $p \neq 2$, every c.e. degree is the isometry degree of a computable presentation of $\ell^p$.  In Section \ref{sec:proof.thm.1}, we show 
that every computable presentation of $\ell^p$ has a degree of linear isometry and that this degree is c.e..

\section{Background}\label{sec:back}

Let $\N = \{0,1, \ldots \}$.  

\subsection{Arboreal matters}\label{sec:back::subsec:arb}

Let $\N^*$ denote the set of all finite sequence of natural numbers.  Note that $\N^*$ contains the empty sequence $\emptyset$.  When $\nu \in \N^*$, let $|\nu|$ denote its length; i.e. the cardinality of its domain.  When $\nu, \nu' \in \N^*$, we write $\nu \subset \nu'$ to mean that  
$\nu$ is a prefix of $\nu'$; for, in this case, since $\nu$ and $\nu'$ are sets of ordered pairs, it indeed is equivalent to say that $\nu$ is a proper subset of $\nu'$.  When $\nu \subset \nu'$, we say that 
$\nu$ is an \emph{ancestor} of $\nu'$.  The maximal ancestor of a nonempty $\nu \in \N^*$ is its \emph{parent}.  
If $\nu$ is the parent of $\nu'$, then we say $\nu'$ is a \emph{child} of $\nu$.  We let $\nu^-$ denote the parent of $\nu$.

A \emph{tree} is a subset $S$ of $\N^*$ that is closed under ancestors; that is, whenever $\nu \in S$, every ancestor of $\nu$ is in $S$.  Suppose $S$ is a tree.  When $\nu \in S$ we refer to $\nu$ as a \emph{node of $S$}.  Thus, $\emptyset$ is a node of every tree; we refer to $\emptyset$ as the \emph{root node}.  
We say a node $\nu$ of $S$ is \emph{terminal} if none of its children belong to $S$.    
Finally, we say that a function $f : S \rightarrow \R$ is \emph{decreasing} if $f(\nu) > f(\nu')$ whenever 
$\nu' \in S$ and $\nu \subset \nu'$.  

\subsection{Background from functional analysis}\label{sec:back::subsec:FA}

We assume that the field of scalars is the complex numbers although all results hold when the field of scalars is the real numbers.  A scalar is \emph{unimodular} if $|\lambda| = 1$.  Let $D(z;r)$ denote the open disk whose center is $z$ and whose radius is $r$.  

Recall that a Banach space is a complete normed linear space.  A subset of a Banach space $\mathcal{B}$ is \emph{linearly dense} if its linear span is dense in $\mathcal{B}$.

The simplest example of a Banach space is $\C^n$ where the norm is given by 
\[
\norm{(z_1, \ldots, z_n)} = \sqrt{\sum_{j=1}^n |z_j|^2 }.
\]

Suppose $1 \leq p < \infty$.  Recall that $\ell^p$ is the set of all functions $f : \N \rightarrow \C$ so that 
$\sum_{n = 0}^\infty |f(n)|^p < \infty$.  When $f \in \ell^p$, the $\ell^p$-norm of $f$ is defined to be 
\[
\norm{f}_p = \left( \sum_{n = 0}^\infty |f(n)|^p \right)^{1/p}.
\]
It is well-known that $\ell^p$ is a Banach space.  For each $n \in \N$, let $e_n = \chi_{\{n\}}$.  
Then, $\{e_0, e_1, \ldots\}$ is the standard basis for $\ell^p$.

Suppose that $\mathcal{B}_0$ and $\mathcal{B}_1$ are Banach spaces and that $T : \mathcal{B}_0 \rightarrow \mathcal{B}_1$.  
If there is a constant $C > 0$ so that $\norm{T(v)}_{\mathcal{B}_1} \leq C \norm{v}_{\mathcal{B}_0}$ for all $v \in \mathcal{B}_0$, then $T$ is \emph{bounded}.  If $T$ is linear, then $T$ is continuous if and only if $T$ is bounded.  
$T$ is an \emph{isomorphism} if it is a linear homeomorphism.  $T$ is \emph{isometric} if 
$\norm{T(u) - T(v)}_{\mathcal{B}_1} = \norm{u - v}_{\mathcal{B}_0}$ whenever $u, v \in \mathcal{B}_0$.  
An isometric isomorphism thus preserves the linear and metric structure of the Banach spaces.   
Finally, if $\mathcal{B}_1 = \C$, then $T$ is a \emph{functional}.

Suppose $1 \leq p < \infty$ and $\frac{1}{p} + \frac{1}{q} = 1$ (i.e. $q$ is the \emph{conjugate} of $p$).  When $f \in \ell^p$ and $g \in \ell^q$, let 
\[
\langle f,g \rangle = \sum_{n = 0}^\infty f(n) \overline{g(n)}.
\]

When $f \in \ell^q$, let $f^*(g) = \langle g,f \rangle$ for all $g \in \ell^p$.  By H\"older's inequality, $\norm{f^*(g)}_1 \leq \norm{g}_p \norm{f}_q$.  Thus, $|f^*(g)| \leq \norm{g}_p\norm{f}_q$, and so $f^*$ is a bounded linear functional on $\ell^p$.

When $f \in \ell^p$, the \emph{support} of $f$, which we denote by $\supp(f)$, is the set of all $n \in \N$ so that 
$f(n) \neq 0$.  Vectors $f,g \in \ell^p$ are \emph{disjointly supported} if their supports are disjoint.  
A subset of $\ell^p$ is disjointly supported if any two of its elements are disjointly supported.  
We will make frequent use of the following observation: if $f_0, \ldots, f_n \in \ell^p$ are disjointly supported, then $\norm{f_0 + \ldots + f_n}_p^p = \norm{f_0}_p^p + \ldots + \norm{f_n}_p^p$.  

We will make use of the following, which is fairly well-known and has a straightforward proof, to construct linear isometries.   

\begin{proposition}\label{prop:unique.linear}
Suppose $1 \leq p < \infty$ and $\{g_n\}_{n \in \N}$ is a sequence of nonzero disjointly supported vectors of $\ell^p$.  Then, there is a unique linear isometry $T : \ell^p \rightarrow \ell^p$ so that 
$T(e_n) = \norm{g_n}^{-1} g_n$.  
\end{proposition}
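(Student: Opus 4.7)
The plan is to define $T$ explicitly on the standard basis and extend by continuity, with the key observation being that disjoint supports make the $\ell^p$-norm behave additively in the $p$-th power. Set $h_n = \norm{g_n}^{-1} g_n$; these are unit vectors that remain pairwise disjointly supported. For any $f \in \ell^p$ and any $M < N$, the partial sums $S_N(f) = \sum_{n=0}^N f(n) h_n$ and $S_M(f)$ differ by $\sum_{n=M+1}^N f(n) h_n$, whose coordinates (when expanded in the standard basis $\{e_m\}$) have pairwise disjoint supports. Hence
\[
\norm{S_N(f) - S_M(f)}_p^p = \sum_{n=M+1}^N |f(n)|^p \norm{h_n}_p^p = \sum_{n=M+1}^N |f(n)|^p,
\]
which tends to $0$ as $M, N \to \infty$ since $f \in \ell^p$. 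So $\{S_N(f)\}$ is Cauchy and we may define $T(f) = \lim_{N \to \infty} S_N(f)$.

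Next I would verify the required properties. Each $S_N$ is clearly linear in $f$, and since pointwise limits of linear maps are linear, $T$ is linear. Taking $M = -1$ (interpreted as the empty sum) and letting $N \to \infty$ in the identity above gives $\norm{T(f)}_p^p = \sum_{n=0}^\infty |f(n)|^p = \norm{f}_p^p$, so $T$ is an isometry on $\ell^p$ (and in particular bounded). By construction $T(e_n) = h_n = \norm{g_n}^{-1} g_n$.

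For uniqueness, suppose $T'$ is any linear isometry of $\ell^p$ satisfying $T'(e_n) = h_n$. Then $T$ and $T'$ agree on each $e_n$, hence on the linear span of $\{e_n : n \in \N\}$. Since linear isometries are continuous and this linear span is dense in $\ell^p$, we conclude $T = T'$.

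The only potentially subtle step is the disjoint-support norm identity, but it is immediate from expanding in the standard basis: if $v_1, \ldots, v_k$ have pairwise disjoint supports in $\ell^p$, then $\sum_j v_j$ has, at each coordinate $m$, the unique nonzero value $v_j(m)$ (if any), so $\norm{\sum_j v_j}_p^p = \sum_m \sum_j |v_j(m)|^p = \sum_j \norm{v_j}_p^p$. No real obstacle is anticipated; the entire argument is a routine use of density of finitely supported vectors together with this additivity.
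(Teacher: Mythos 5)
Your proof is correct, and it is the standard argument: the paper states this proposition as background without proof, and the intended justification is exactly what you give — additivity of $\norm{\cdot}_p^p$ over disjointly supported vectors yields that the partial sums $\sum_{n\le N} f(n)\norm{g_n}^{-1}g_n$ are Cauchy, the limit defines a linear isometry by completeness and continuity of the norm, and uniqueness follows from density of the span of $\{e_n\}$ together with continuity of isometries. No gaps.
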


When $f,g \in \ell^p$, let $\sigma_0(f,g) = |2(\norm{f}_p^p + \norm{g}_p^p) - \norm{f + g}_p^p - \norm{f - g}_p^p|$.  The following was proven in 1956 by O. Hanner and independently by J. Lamperti in 1958 \cite{Hanner.1956}, \cite{Lamperti.1958}.

\begin{proposition}\label{prop:sigma}
Suppose $1 \leq p < \infty$ and $p \neq 2$.  Then, $f,g \in \ell^p$ are disjointly supported if and only if
$\sigma_0(f,g) = 0$.
\end{proposition}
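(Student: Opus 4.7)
The plan is to reduce the proposition to a pointwise inequality on scalars. Define, for scalars $a, b \in \C$,
\[
\phi_p(a,b) = 2(|a|^p + |b|^p) - |a+b|^p - |a-b|^p.
\]
Then for any $f, g \in \ell^p$ we have
\[
2(\norm{f}_p^p + \norm{g}_p^p) - \norm{f+g}_p^p - \norm{f-g}_p^p = \sum_{n=0}^{\infty} \phi_p(f(n),g(n)),
\]
so the whole argument comes down to understanding the sign of $\phi_p$ and its equality case.

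The easy direction is a direct calculation: if $f$ and $g$ are disjointly supported, then for every $n$ one of $f(n), g(n)$ vanishes, so $|f(n) \pm g(n)|^p = |f(n)|^p + |g(n)|^p$, and hence $\norm{f+g}_p^p = \norm{f-g}_p^p = \norm{f}_p^p + \norm{g}_p^p$, giving $\sigma_0(f,g) = 0$.

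For the converse, the central claim I would establish is the scalar Clarkson-type inequality: for $p \neq 2$ the function $\phi_p$ is of constant sign on $\C \times \C$ (nonnegative if $1 \leq p < 2$, nonpositive if $p > 2$), and $\phi_p(a,b) = 0$ only when $ab = 0$. Once this is proved, every term in the series representation above has the same sign, so
\[
\sigma_0(f,g) = \sum_{n=0}^{\infty} |\phi_p(f(n),g(n))|,
\]
and $\sigma_0(f,g) = 0$ forces $\phi_p(f(n),g(n)) = 0$ for every $n$, hence $f(n)g(n) = 0$ for every $n$, i.e.\ $f$ and $g$ are disjointly supported.

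The main obstacle is the pointwise inequality with its equality case. I would reduce it to the real case by writing $a = re^{i\alpha}$, $b = se^{i\beta}$ and, by absorbing a unimodular factor, reducing to $a, b \in \R$ with $a, b \geq 0$; then by homogeneity and symmetry one may assume $b = 1$ and $0 \leq a \leq 1$, and study the single-variable function $h(a) = 2(a^p + 1) - (1+a)^p - (1-a)^p$ on $[0,1]$ via derivatives. One checks $h(0) = 0$, $h(1) = 2$, and that $h$ is strictly monotone on $(0,1]$ with sign determined by whether $p < 2$ or $p > 2$ (the sign of $h''$ comes from the sign of $p-2$ after one differentiation in the exponent). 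Handling the complex case only requires observing that $|a+b|^2 + |a-b|^2 = 2(|a|^2 + |b|^2)$ so the rotations of $b$ relative to $a$ can only decrease the extremal character, and a reduction to the real case via the substitution above. This is the computational heart of Hanner/Lamperti, and once it is in hand the proposition follows immediately from the series decomposition.
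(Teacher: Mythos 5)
The paper offers no proof of this proposition---it is quoted as a classical result of Hanner and Lamperti with citations---so there is no in-paper argument to match. Your overall strategy is exactly Lamperti's: write $2(\norm{f}_p^p+\norm{g}_p^p)-\norm{f+g}_p^p-\norm{f-g}_p^p$ as $\sum_n \phi_p(f(n),g(n))$ and reduce everything to the scalar inequality that $\phi_p$ has constant sign with equality only when $ab=0$. The forward direction and the deduction of the converse from that scalar lemma are both fine.

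The problems are all in the scalar lemma, which you correctly identify as the computational heart but do not correctly execute. First, $h(1)=2(1+1)-2^p-0=4-2^p$, not $2$; the sign of this value is exactly what separates $p<2$ from $p>2$, so the error is not cosmetic. Second, $h''(a)=p(p-1)\bigl(2a^{p-2}-(1+a)^{p-2}-(1-a)^{p-2}\bigr)$ does \emph{not} have constant sign determined by $p-2$: for $p=3/2$ the bracket tends to $+\infty$ as $a\to 0^+$ (from $2a^{p-2}$) and to $-\infty$ as $a\to 1^-$ (from $-(1-a)^{p-2}$), so $h''$ changes sign on $(0,1)$ and the proposed second-derivative argument collapses. (Monotonicity of $h$ is still true, but it comes from super/subadditivity of $t\mapsto t^{p-1}$, not from $h''$.) Third, and most seriously, a unimodular factor lets you take $a\geq 0$ real but not $b$, and the reduction of the complex case to the real case runs \emph{backwards} when $p>2$: setting $u=|a+b|^2$, $v=|a-b|^2$, the parallelogram law fixes $u+v=2(|a|^2+|b|^2)$, and convexity of $t\mapsto t^{p/2}$ shows $u^{p/2}+v^{p/2}$ is \emph{minimized} when $b/a$ is purely imaginary; so for $p>2$ the real case is the slackest instance of the inequality, and verifying it says nothing about the binding case. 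The repair, which also makes the analysis of $h$ unnecessary, is Lamperti's actual argument: convexity (resp.\ concavity) of $t\mapsto t^{p/2}$ gives $u^{p/2}+v^{p/2}\geq 2\bigl(\tfrac{u+v}{2}\bigr)^{p/2}=2(|a|^2+|b|^2)^{p/2}$ for $p>2$ (reversed for $p<2$), and strict super- (resp.\ sub-) additivity of $t\mapsto t^{p/2}$ gives $(|a|^2+|b|^2)^{p/2}\geq |a|^p+|b|^p$ (reversed for $p<2$) with equality iff $ab=0$; chaining the two yields the scalar lemma for all complex $a,b$ at once.
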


The following are more or less immediate consequences of Proposition \ref{prop:sigma}.  They were first observed by S. Banach and later rigorously proven by J. Lamperti \cite{Banach.1987}, \cite{Lamperti.1958}.

\begin{theorem}\label{thm:pres.disj.support}
Suppose $1 \leq p < \infty$ and $p \neq 2$.  If $T : \ell^p \rightarrow \ell^p$ is linear and isometric, then 
$T$ preserves disjointness of support.  That is, $T(f)$ and $T(g)$ are disjointly supported whenever $f,g \in \ell^p$ are disjointly supported.
\end{theorem}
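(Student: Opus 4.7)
The plan is to derive the conclusion directly from Proposition \ref{prop:sigma} by exploiting the fact that $\sigma_0$ is built entirely from norms of linear combinations of $f$ and $g$, which are preserved under a linear isometry.

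First I would observe that for any linear isometry $T : \ell^p \to \ell^p$ and any $f, g \in \ell^p$, linearity gives $T(f+g) = T(f) + T(g)$ and $T(f-g) = T(f) - T(g)$, and then isometry gives
\[
\norm{T(f)}_p = \norm{f}_p, \quad \norm{T(g)}_p = \norm{g}_p, \quad \norm{T(f) \pm T(g)}_p = \norm{f \pm g}_p.
\]
Substituting these four equalities into the definition
\[
\sigma_0(T(f), T(g)) = \left| 2(\norm{T(f)}_p^p + \norm{T(g)}_p^p) - \norm{T(f) + T(g)}_p^p - \norm{T(f) - T(g)}_p^p \right|
\]
yields $\sigma_0(T(f), T(g)) = \sigma_0(f,g)$, so $T$ preserves the functional $\sigma_0$.

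Now suppose $f, g \in \ell^p$ are disjointly supported. Since $p \neq 2$, Proposition \ref{prop:sigma} gives $\sigma_0(f,g) = 0$, hence $\sigma_0(T(f), T(g)) = 0$, and applying Proposition \ref{prop:sigma} in the other direction yields that $T(f)$ and $T(g)$ are disjointly supported. This completes the argument.

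There is essentially no obstacle here: the whole content of the theorem is packaged in Proposition \ref{prop:sigma}, and the role of $T$ being a \emph{linear} isometry (as opposed to a possibly nonlinear one) is exactly to ensure that $T$ transports the four norm quantities appearing in $\sigma_0$. The hypothesis $p \neq 2$ is needed only because Proposition \ref{prop:sigma} fails for $p = 2$, where the parallelogram law forces $\sigma_0 \equiv 0$.
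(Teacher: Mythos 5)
Your argument is correct and is exactly the route the paper intends: the paper states Theorem \ref{thm:pres.disj.support} without proof, remarking that it is a ``more or less immediate'' consequence of Proposition \ref{prop:sigma}, and your write-up simply spells out that consequence (a linear isometry preserves the four norms entering $\sigma_0$, hence preserves the vanishing of $\sigma_0$, which for $p \neq 2$ characterizes disjointness of support). Nothing is missing.
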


\begin{theorem}\label{thm:classification}
Suppose $p$ is a real number so that $p \geq 1$ and $p \neq 2$.  Let $T$ be a linear map of $\ell^p$ onto $\ell^p$.  Then, $T$ is an isometric isomorphism if and only if there is a permutation $\phi$ of $\N$ and a sequence $\{\lambda_n\}_{n \in \N}$ of unimodular scalars so that $T(e_n) = \lambda_n e_{\phi(n)}$ for all $n$.
Furthermore, if $\phi$ is a permutation of $\N$, and if $\Lambda = \{\lambda_n\}_{n \in \N}$ is a sequence of unimodular scalars, then there is a unique isometric isomorphism $T_{\phi, \Lambda}$ of $\ell^p$ so that 
$T_{\phi, \Lambda}(e_n) = \lambda_n e_{\phi(n)}$ for each $n \in \N$.
\end{theorem}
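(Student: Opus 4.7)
My plan is to prove both directions via preservation of disjoint support (Theorem~\ref{thm:pres.disj.support}), with the real content lying in the forward implication. For the forward direction, assume $T$ is a surjective linear isometry; then $T$ is a bijection and its inverse $T^{-1}$ is also a surjective linear isometry. Since $\{e_n\}$ is pairwise disjointly supported, Theorem~\ref{thm:pres.disj.support} gives that $\{T(e_n)\}$ is pairwise disjointly supported, and each $T(e_n)$ has norm $1$. The crucial step is to show that $\supp(T(e_n))$ is a singleton. If not, partition the support into two nonempty pieces and write $T(e_n) = g_1 + g_2$ with $g_1,g_2 \in \ell^p$ nonzero and disjointly supported. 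Applying Theorem~\ref{thm:pres.disj.support} to $T^{-1}$ yields that $T^{-1}(g_1)$ and $T^{-1}(g_2)$ are nonzero and disjointly supported; but their sum is $e_n$, whose support is $\{n\}$, a contradiction. Therefore $T(e_n) = \lambda_n e_{\phi(n)}$ for some $\phi(n) \in \N$ and some $\lambda_n \in \C$, with $|\lambda_n|=1$ by norm preservation.

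I then verify that $\phi:\N\to\N$ is a bijection. Injectivity follows at once from injectivity of $T$ and the nonvanishing of the $\lambda_n$. For surjectivity, fix $m \in \N$, choose $f \in \ell^p$ with $T(f) = e_m$, expand $f = \sum_n f(n) e_n$, and apply $T$ continuously to obtain $e_m = \sum_n \lambda_n f(n) e_{\phi(n)}$; since the $e_{\phi(n)}$ are disjointly supported, some $\phi(n)$ must equal $m$.

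For the converse and the ``furthermore'' clause, I would apply Proposition~\ref{prop:unique.linear} to the normalized disjointly supported sequence $g_n := \lambda_n e_{\phi(n)}$, obtaining the unique linear isometry $T_{\phi,\Lambda}:\ell^p \to \ell^p$ with $T_{\phi,\Lambda}(e_n) = \lambda_n e_{\phi(n)}$. Surjectivity of $T_{\phi,\Lambda}$ follows from the identity $e_m = \overline{\lambda_{\phi^{-1}(m)}}\, T_{\phi,\Lambda}(e_{\phi^{-1}(m)})$: the range contains the standard basis, hence its dense linear span, and since the range of a linear isometry on a Banach space is closed, it equals $\ell^p$. The main obstacle I anticipate is the support-reduction step in the forward direction; once that is in hand, the remainder is bookkeeping with Theorem~\ref{thm:pres.disj.support} and Proposition~\ref{prop:unique.linear}.
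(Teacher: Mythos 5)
The paper does not actually prove Theorem~\ref{thm:classification}: it is quoted from the literature (Banach, Lamperti) with the remark that it is a ``more or less immediate'' consequence of Proposition~\ref{prop:sigma}, so there is no in-paper argument to compare yours against. Your proof is correct and is the standard derivation from disjoint-support preservation. The one step with real content --- that $\supp(T(e_n))$ is a singleton --- is handled properly: splitting $T(e_n)=g_1+g_2$ into nonzero disjointly supported pieces and pushing them back through $T^{-1}$ (which is again a surjective linear isometry, so Theorem~\ref{thm:pres.disj.support} applies to it) forces $e_n$ to have support of size at least two, a contradiction. The injectivity and surjectivity of $\phi$, and the existence, uniqueness, and surjectivity of $T_{\phi,\Lambda}$ via Proposition~\ref{prop:unique.linear} plus the closed-range argument, are all sound. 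One pedantic caveat, which is really a defect of the theorem's phrasing rather than of your proof: in the ``if'' direction, a linear surjection $T$ that merely agrees with $T_{\phi,\Lambda}$ on the basis vectors need not equal $T_{\phi,\Lambda}$ off their algebraic span unless $T$ is assumed continuous; if you want your write-up airtight against the literal statement, either note that $T$ is taken to be bounded or phrase the conclusion as the existence of the isometric isomorphism $T_{\phi,\Lambda}$ determined by those basis values, which is how the result is used elsewhere in the paper.
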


We now summarize some definitions and results from \cite{McNicholl.2017}.  When $f,g \in \ell^p$, write $f \preceq g$ if and only if $f = g \cdot \chi_A$ for some $A \subseteq \N$.  
In this case we say $f$ is a \emph{subvector} of $g$.  It follows that the subvector relation is a partial order on $\ell^p$.  Accordingly, if $\mathcal{B}$ is a subspace of $\ell^p$, then $f \in \mathcal{B}$ is an \emph{atom of $\mathcal{B}$} if there is no $g \in \mathcal{B}$ so that $\mathbf{0} \prec g \prec f$.  It follows that $f$ is an atom of $\ell^p$ if and only if $f$ is a nonzero scalar multiple of a standard basis vector.  

Note that $f$ is a subvector of $g$ if and only if $f$ and $g - f$ are disjointly supported.  Thus, when $p \neq 2$, the subvector ordering of $\ell^p$ is preserved by linear isometries.

Suppose $S$ is a tree and $\phi : S \rightarrow \ell^p$.  We say $\phi$ is \emph{separating} if 
$\phi(\nu)$ and $\phi(\nu')$ are disjointly supported whenever $\nu, \nu' \in S$ are incomparable.  
We say $\phi$ is \emph{summative} if for every nonterminal node $\nu$ of $S$, $\phi(\nu) = \sum_{\nu'} \phi(\nu')$ where $\nu'$ ranges over the children of $\nu$ in $S$.  
Finally, we say $\phi$ is a \emph{disintegration} if it is injective, separating, summative, never zero, and if its range is linearly dense in $\ell^p$.   

Suppose $\phi : S \rightarrow \ell^p$ is a disintegration.  A chain $C \subseteq S$ is \emph{almost norm-maximizing} if whenever $\nu \in C$ is a nonterminal node of $S$, $C$ contains a child $\nu'$ of $\nu$ so that 
\[
\max_\mu \norm{\phi(\mu)}_p^p \leq \norm{\phi(\nu')}_p^p + 2^{-|\nu|}  
\]
where $\mu$ ranges over the children of $\nu$ in $S$.  The existence of such a child follows from calculus.  

The following is proven in \cite{McNicholl.2017}.

\begin{theorem}\label{thm:lim.chains}
Suppose $1 \leq p < \infty$ and $p \neq 2$, and let $\phi$ be a disintegration of $\ell^p$. 
\begin{enumerate}
	\item If $C$ is an almost norm-maximizing chain of $\phi$, then the $\preceq$-infimum of $\phi[C]$ exists and is either \textbf{0} or an atom of $\preceq$.  Furthermore, $\inf\phi[C]$ is the limit in the $\ell^p$ norm of $\phi(\nu)$ as $\nu$ traverses the nodes in $C$ in increasing order.\label{thm:lim.chains::itm:inf}
		
	\item If $\{C_n\}_{n=0}^\infty$ is a partition of $\dom(\phi)$ into almost norm-maximizing chains, then $\inf\phi[C_0], \inf\phi[C_1], ...$ are disjointly supported.  Furthermore, for each $j \in \N$, there exists a unique $n$ so that $\{j\}$ is the support of  $\inf \phi[C_n]$. \label{thm:lim.chains::itm:unique} 
\end{enumerate}
\end{theorem}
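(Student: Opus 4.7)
The plan is to prove parts (1) and (2) in turn. For part (1), enumerate $C$ in increasing order as $\nu_1 \subset \nu_2 \subset \cdots$ (possibly terminating at a terminal node of $S$). Because $\phi$ is separating and summative, the images of the children at each nonterminal node partition the parent's support, so $\phi(\nu_{k+1}) \preceq \phi(\nu_k)$ and the supports nest downward. Setting $A = \bigcap_k \supp(\phi(\nu_k))$ and $f = \phi(\nu_1) \cdot \chi_A$, one has $\phi(\nu_k)(j) = \phi(\nu_1)(j)$ for $j \in A$ and $\phi(\nu_k)(j) \to 0$ for each $j \in \supp(\phi(\nu_1)) \setminus A$; a dominated-convergence argument gives $\phi(\nu_k) \to f$ in $\ell^p$-norm, and one reads off directly from the definition of $\preceq$ that $f = \inf \phi[C]$.

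The crux of part (1) is to show $|A| \leq 1$ when $f \neq 0$. Suppose for contradiction that $A$ contains distinct $j_1, j_2$, and set $\alpha_r = \phi(\nu_1)(j_r) \neq 0$ for $r = 1,2$. In \emph{Case A}, some descendant $\mu$ of $\nu_1$ has exactly one of $j_1, j_2$ in $\supp(\phi(\mu))$. Walking from $\nu_1$ down to $\mu$ locates a node $\mu^{(i)}$ with two distinct children containing $j_1$ and $j_2$ respectively. Let $K$ be the largest index with $\nu_K$ an ancestor of $\mu^{(i)}$, and let $\mu'$ be the child of $\nu_K$ whose subtree contains the $j_1$-child of $\mu^{(i)}$. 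Then $j_1 \in \supp(\phi(\mu'))$, and also $j_1 \in \supp(\phi(\nu_{K+1}))$ since $A \subseteq \supp(\phi(\nu_{K+1}))$; yet $\mu'$ and $\nu_{K+1}$ are distinct children of $\nu_K$ with disjoint supports, a contradiction. In \emph{Case B}, $j_1$ and $j_2$ always co-occur in the supports of descendants of $\nu_1$. Here I first show that the span of $\phi[T_{\nu_1}]$, where $T_{\nu_1} = \{\nu \in S : \nu_1 \subseteq \nu\}$, is dense in $\mathcal{B}_1 := \{h \in \ell^p : \supp(h) \subseteq \supp(\phi(\nu_1))\}$, by applying the bounded projection $h \mapsto h \cdot \chi_{\supp(\phi(\nu_1))}$ to an approximation of an element of $\mathcal{B}_1$ by the dense $\phi[S]$. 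Approximating $e_{j_1} \in \mathcal{B}_1$ by sums $\sum_i c_i^{(n)} \phi(\mu_i^{(n)})$ with $\mu_i^{(n)} \in T_{\nu_1}$, one has $\phi(\mu_i^{(n)})(j_r) = \alpha_r$ when $j_r \in \supp(\phi(\mu_i^{(n)}))$ and $0$ otherwise; the Case B hypothesis makes these two indicators coincide, so the coordinate evaluations at $j_1$ and $j_2$ reduce to $\alpha_1 D^{(n)}$ and $\alpha_2 D^{(n)}$ for a common scalar $D^{(n)}$, which must tend to $1$ and $0$ respectively---impossible.

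For part (2), suppose $\inf \phi[C_n]$ and $\inf \phi[C_m]$ with $n \neq m$ are both nonzero atoms on the same coordinate $\{j\}$. Every node of $C_n \cup C_m$ then lies in $J_j := \{\nu \in S : j \in \supp(\phi(\nu))\}$, which by separation is a chain in $S$ and by summativity contains a unique child of each of its nonterminal members. Almost norm-maximality then forces each of $C_n$ and $C_m$ to be a final segment of $J_j$, and two nonempty disjoint final segments of a chain cannot coexist, contradicting that $\{C_n\}$ partitions $S$. For existence, given $j \in \N$, note that $\lambda \in J_j$ since linear denseness forces $\supp(\phi(\lambda)) = \N$. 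If $J_j$ terminates at a terminal $\eta \in S$, the density lemma applied to $T_\eta = \{\eta\}$ forces $\phi(\eta)$ to be a scalar multiple of $e_j$, so $\inf \phi[C_n] = \phi(\eta)$ for the unique $C_n \ni \eta$. If $J_j$ is infinite, writing its nodes in increasing order as $\eta^{(0)} \subset \eta^{(1)} \subset \cdots$, the chain-memberships of these nodes must eventually stabilize: each time the chain jumps, almost norm-maximality forces $\norm{\phi(\eta^{(k+1)})}_p^p \leq \tfrac{1}{2}(\norm{\phi(\eta^{(k)})}_p^p + 2^{-|\eta^{(k)}|})$, yet these norms are bounded below by $|\phi(\lambda)(j)|^p > 0$; the stable chain's infimum is then the desired atom on $\{j\}$ by part (1).

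The hardest step will be Case B of part (1): translating the qualitative linear denseness of $\phi[S]$ in $\ell^p$ into a quantitative coordinate-level contradiction. The auxiliary density of $\phi[T_{\nu_1}]$ in $\mathcal{B}_1$ is the key enabler, and the punchline crucially exploits that $\phi$ assigns a fixed scalar to each coordinate across all descendants whose support contains it.
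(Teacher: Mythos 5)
The paper does not actually prove Theorem \ref{thm:lim.chains}; it is imported from \cite{McNicholl.2016}. So there is no internal proof to compare against, and your argument has to be judged on its own terms. On those terms it is essentially correct and complete. The reduction of part (1) to showing $|A|\leq 1$, the dichotomy between ``$j_1$ and $j_2$ eventually separate below $\nu_1$'' and ``$j_1,j_2$ always co-occur,'' and the use of the norm-one band projection $h\mapsto h\cdot\chi_{\supp(\phi(\nu_1))}$ to localize linear density to $T_{\nu_1}$ (noting that $\phi(\sigma)$ projects to $\phi(\sigma)$, to $\mathbf{0}$, or to $\phi(\nu_1)$ according as $\sigma\supseteq\nu_1$, $\sigma$ is incomparable with $\nu_1$, or $\sigma\subset\nu_1$) all check out; the punchline of Case B, that the two coordinate functionals evaluate to $\alpha_1 D^{(n)}$ and $\alpha_2 D^{(n)}$ with a common $D^{(n)}$, is exactly the right use of the fact that $\phi$ freezes each coordinate's value along the support. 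Part (2) is also sound: $J_j$ is a maximal chain by separation and summativity, $\lambda\in J_j$ because linear density forces $\supp(\phi(\lambda))=\N$, distinct almost norm-maximizing chains meeting $J_j$ in a nonzero infimum must be final segments of $J_j$, and your jump inequality $\norm{\phi(\eta^{(k+1)})}_p^p\leq\tfrac12(\norm{\phi(\eta^{(k)})}_p^p+2^{-k})$ combined with the lower bound $|\phi(\lambda)(j)|^p$ correctly forces stabilization. It is worth noting that your argument never uses $p\neq 2$, so you in fact prove slightly more than is claimed.

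One small bookkeeping slip in Case A of part (1): with $K$ maximal such that $\nu_K\subseteq\mu^{(i)}$, the child $\mu'$ of $\nu_K$ leading to the $j_1$-child of $\mu^{(i)}$ need not be distinct from $\nu_{K+1}$ in the sub-case $\nu_K=\mu^{(i)}$, since then $\mu'$ is itself a child of $\mu^{(i)}$ and maximality of $K$ gives no information. The fix is immediate: $\mu^{(i)}$ has two distinct children carrying $j_1$ and $j_2$ respectively, and $\nu_{K+1}$ can coincide with at most one of them, so run your disjointness argument with whichever of $j_1,j_2$ routes through the other child (if $\nu_{K+1}$ equals the $j_1$-child, then $j_2\in A\subseteq\supp(\phi(\nu_{K+1}))$ already contradicts separation). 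With that one-line repair the proof stands.
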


\subsection{Background from computable analysis}\label{sec:back::subsec:CA}

We assume the reader is familiar with the central concepts of computability theory, including computable and computability enumerable sets, Turing reducibility, and enumeration reducibility. These are explained in \cite{Cooper.2004}.  We begin with the application of computability concepts to Banach spaces.  Our approach is essentially the same as in \cite{Pour-El.Richards.1989}.

A real $r$ is \emph{left (right)-c.e.} if its left (right) Dedekind cut is c.e..  A sequence $\{r_n\}_{n \in \N}$ of reals is \emph{uniformly left (right)-c.e.} if the left (right) Dedekind cut of $r_n$ is c.e. uniformly in $n$.

Let $\mathcal{B}$ be a Banach space.  A function $R : \N \rightarrow \mathcal{B}$ is a \emph{structure} on $\mathcal{B}$ if its range is linearly dense in $\mathcal{B}$.  If $R$ is a structure on $\mathcal{B}$, then $(\mathcal{B}, R)$ is a \emph{presentation} of $\mathcal{B}$. 

A Banach space may have a presentation that is designated as \emph{standard}; such a space is identified with its standard presentation.  In particular, if we let $R(n) = e_n$, then $(\ell^p, R)$ is the standard presentation of $\ell^p$.  If $R(j)$ is the $(j+1)$st vector in the standard basis for $\C^n$ when $j < n$, and if $R(j) = \mathbf{0}$ when $j \geq n$, then $(\C^n, R)$ is the standard presentation of $\C^n$.

Suppose $\mathcal{B}^\# = (\mathcal{B}, R)$ is a presentation of $\mathcal{B}$.  Then, $\mathcal{B}^\#$ induces associated classes of rational vectors and rational open balls as follows.  We say $v \in \mathcal{B}$ is a \emph{rational vector} of $\mathcal{B}^\#$ if there exist $\alpha_0, \ldots, \alpha_M \in \Q(i)$ so that $v = \sum_{j = 0}^M \alpha_j R(j)$.  A \emph{rational open ball of $\mathcal{B}^\#$} is an open ball whose center is a rational vector of $\mathcal{B}^\#$ and whose radius is a positive rational number.

The rational vectors of $\mathcal{B}^\#$ then give rise to associated classes of computable vectors and sequences.  A vector $v \in \mathcal{B}$ is a \emph{computable vector of $\mathcal{B}^\#$} if there is an algorithm that given any $k \in \N$ as input produces a rational vector $u$ of $\mathcal{B}^\#$ so that $\norm{u - v}_{\mathcal{B}} < 2^{-k}$.  A sequence $\{v_n\}_{n \in \N}$ of vectors of $\mathcal{B}$ is a \emph{computable sequence of $\mathcal{B}^\#$} if $v_n$ is a computable vector of $\mathcal{B}^\#$ uniformly in $n$.

When $X \subseteq \N$, the classes of $X$-computable vectors and $X$-computable sequences of $\mathcal{B}^\#$ are defined by means of the usual relativizations.   If $S \subseteq \N^*$, then the definitions of the classes of computable and $X$-computable maps from $S$ into $\mathcal{B}^\#$ are similar to the definitions of computable and $X$-computable sequences of $\mathcal{B}^\#$. 

Presentations $\mathcal{B}_0^\#$ and $\mathcal{B}_1^\#$ of Banach spaces $\mathcal{B}_0$ and $\mathcal{B}_1$ respectively induce an associated class of computable maps from 
$\mathcal{B}_0^\#$ into $\mathcal{B}_1^\#$.  Namely, a map $T : \mathcal{B}_0 \rightarrow \mathcal{B}_1$ is said to be a \emph{computable map of $\mathcal{B}_0^\#$ into $\mathcal{B}_1^\#$} if 
there is an algorithm $P$ with the following properties:
\begin{enumerate}
	\item Given a (code of a) rational ball $B_1$ of $\mathcal{B}_0^\#$ as input, if $P$ halts then it produces a rational ball $B_2$ of $\mathcal{B}_1^\#$ so that $T[B_1] \subseteq B_2$.  
	
	\item If $U$ is a neighborhood of $T(v)$, then there is a rational ball $B_1$ of $\mathcal{B}_0^\#$ so that $v \in B_1$ and given $B_1$, $P$ produces a rational ball $B_2 \subseteq U$.
\end{enumerate}
 In other words, it is possible to compute arbitrarily good approximations of $T(v)$ from sufficiently good approximations of $v$.  This definition relativizes in the obvious way.

When the map $T$ is linear, the following well-known characterization is useful.

\begin{theorem}\label{thm:comp.map}
Suppose $\mathcal{B}_1^\#$ and $\mathcal{B}_2^\#$ are presentations of Banach spaces and that $\mathcal{B}_1^\#= (\mathcal{B}_1, R_1)$.  Suppose also that $T : \mathcal{B}_1 \rightarrow \mathcal{B}_2$ is linear.  Then, $T$ is an $X$-computable map of $\mathcal{B}_1^\#$ into $\mathcal{B}_2^\#$ if and only if $\{T(R_1(n))\}_{n \in \N}$ is an $X$-computable sequence of $\mathcal{B}_2^\#$.
\end{theorem}

We say that a presentation $\mathcal{B}^\#$ of a Banach space $\mathcal{B}$ is a \emph{computable presentation} if the norm is a computable map from $\mathcal{B}^\#$ into $\C$.

For a proof of the following see \cite{Ziegler.2006} or Section 6.3 of \cite{Weihrauch.2000}.

\begin{proposition}\label{prop:zero.find}
Suppose $r$ is a computable positive number. 
If $f$ is a computable real-valued function on $\C$, and if $f$ has exactly one zero in $D(0;r)$, then this zero is a computable point.  Furthermore, this zero can be computed uniformly in $f,r$.
\end{proposition}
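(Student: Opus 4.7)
The plan is to reduce the problem to the semi-decidability of the predicate ``$f$ is bounded away from zero on $\overline{D(0;r)} \setminus B$'', uniformly in a rational open ball $B$. Granting this, the algorithm for producing $z_0$ to accuracy $2^{-k}$ is to enumerate rational open balls $B$ of radius less than $2^{-k}$ and, for each, begin a parallel attempt to certify that $f$ is nonzero on $\overline{D(0;r)} \setminus B$; then output the center of the first $B$ whose certification succeeds. Correctness is immediate: if the certification halts, then $z_0 \in \overline{D(0;r)}$ must lie in $B$, so the center of $B$ is within $2^{-k}$ of $z_0$. Termination: pick a rational $c$ with $|c - z_0| < 2^{-k-2}$ and let $B^\ast = D(c;\, 2^{-k-1})$; then $\overline{D(0;r)} \setminus B^\ast$ is compact and omits the unique zero $z_0$, so $\inf_{\overline{D(0;r)} \setminus B^\ast}|f| > 0$, and this $B^\ast$ will eventually be certified. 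Uniformity in $f$ and $r$ is built into the construction.

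To certify ``$f$ nonzero on $\overline{D(0;r)} \setminus B$'' I would use a computable modulus of continuity for $f$, which is a standard consequence of type-2 computability on a compact domain. Given a rational candidate lower bound $\delta > 0$, choose a fineness $\epsilon > 0$ from the modulus so that $|f(z) - f(w)| < \delta/2$ whenever $z,w \in \overline{D(0;r)}$ with $|z-w| < \epsilon$. Using a rational lower approximation $r_n$ to $r$ with $r - r_n < \epsilon/2$, enumerate a finite set of rational points $c_1, \dots, c_N$ inside $D(0;r_n)$ that $\epsilon$-covers $\overline{D(0;r)}$, compute each $|f(c_i)|$ to accuracy $\delta/4$, and check whether every $c_i$ lying outside the $\epsilon$-neighborhood of $B$ satisfies $|f(c_i)| > 3\delta/4$. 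If so, the modulus guarantees $|f| > \delta/4$ throughout $\overline{D(0;r)} \setminus B$. Running this in parallel over all rational $\delta > 0$ produces the required c.e. verification, halting exactly when $\inf_{\overline{D(0;r)} \setminus B}|f| > 0$.

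The main obstacle is the geometric and bookkeeping care required when $r$ is an irrational computable real: the $\epsilon$-nets must consist of rational points strictly inside the disk (where $f$ is actually defined), yet still cover $\overline{D(0;r)}$, and ``outside the $\epsilon$-neighborhood of $B$'' must be decidable on rational inputs. Both are routine given rational bounds on $r$ but must be arranged carefully to preserve uniformity in $f$ and $r$.
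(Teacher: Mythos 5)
The paper does not prove this proposition; it is quoted from the literature (Ziegler, and Weihrauch \S 6.3), and your argument is the standard one behind those references: show that the zero set is co-c.e.\ closed by certifying, for rational balls $B$, that $f$ is bounded away from $0$ on $\overline{D(0;r)}\setminus B$, and then use uniqueness of the zero to turn this into a convergent search. The overall architecture (parallel search over balls of radius $<2^{-k}$, correctness from soundness of the certificate, termination from compactness of $\overline{D(0;r)}\setminus B^\ast$) is right, and the uniformity claim is fine.

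There is, however, a concrete error in the certification step that breaks soundness as written. You check only those net points $c_i$ lying \emph{outside the $\epsilon$-neighborhood of $B$}, i.e.\ with $d(c_i,B)\ge\epsilon$. But a point $z\in\overline{D(0;r)}\setminus B$ lying on or near $\partial B$ has its nearest net point possibly inside $B$ or within $\epsilon$ of $B$; that net point is then excluded from the check, and the modulus of continuity gives you no control over $|f(z)|$. So the certificate can succeed while the zero lies just outside $B$, and the conclusion ``$|f|>\delta/4$ throughout $\overline{D(0;r)}\setminus B$'' does not follow; the output could then be off by an additional $O(\epsilon)$, where $\epsilon$ is dictated by whichever $\delta$ happens to succeed, not by $2^{-k}$. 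The exclusion goes in the wrong direction: you must check \emph{more} points, not fewer. The correct condition is to discard only those $c_i$ that sit in $B$ with an $\epsilon$-margin (say, $D(c_i;\epsilon)\subseteq B$, a decidable condition on rational data) and test all remaining net points; then every $z\in\overline{D(0;r)}\setminus B$ is within $\epsilon$ of a tested point and soundness holds, while completeness survives because for small $\delta$ (hence small $\epsilon$) the tested points all lie in a compact set shrinking to $\overline{D(0;r)}\setminus B$, on which $|f|$ has positive minimum. With that one-line repair, and the boundary bookkeeping you already flag, the proof is correct.
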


The following is proven in \cite{McNicholl.2017}.

\begin{theorem}\label{thm:comp.disint}
Suppose $p$ is a computable real so that $p \geq 1$ and $p \neq 2$.  Then, every computable presentation of $\ell^p$ has a computable disintegration.
\end{theorem}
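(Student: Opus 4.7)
The plan is to combine Proposition \ref{prop:sigma} with an approximation-and-refinement construction. Let $(\ell^p, R)$ be the given computable presentation. Since $p \neq 2$, the computable function $\sigma_0$ detects disjoint support via its zero set; because the norm is computable, $\sigma_0(f,g)$ is computable for computable $f,g$, so for each rational $\varepsilon > 0$ the relation $\sigma_0(f,g) < \varepsilon$ is semidecidable. This gives us a workable test for approximate disjoint support, which I will use as the main splitting tool.

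First I would construct a computable vector $\phi(\lambda)$ of full support, i.e.\ with $\phi(\lambda)(j) \neq 0$ for every $j \in \N$. Using the linear density of $\{R(n)\}_{n\in\N}$, I take $\phi(\lambda) = \sum_n c_n R(n)$ for rapidly decreasing positive rational $c_n$; the $c_n$ are chosen adaptively, with each new $c_{n+1}$ so small relative to the already-computed partial sums that no coordinate previously known to be nonzero can be cancelled out, and so that every $j \in \N$ is eventually forced into the support via a finite linear combination approximating $e_j$. Full support for $\phi(\lambda)$ is needed so that, by Theorem \ref{thm:lim.chains}, the almost norm-maximizing chains of the finished disintegration capture every atom $e_j$, which in turn yields linear density of the range.

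Next I would build the tree $S$ and extend $\phi$ level by level. At a node $\nu$ with $\phi(\nu)$ already specified, I search for children $\nu_0,\dots,\nu_m$ together with vectors $\phi(\nu_i)$ that are nonzero, pairwise disjointly supported, and sum to $\phi(\nu)$. Since exact disjoint support is only $\Pi^0_1$, the construction is layered: at approximation level $k$ one searches for rational vectors $w_0^{(k)},\dots,w_m^{(k)}$ with $\sigma_0(w_i^{(k)},w_j^{(k)}) < 2^{-k}$ for $i\neq j$, with $\|\sum_i w_i^{(k)} - \phi(\nu)\|_p < 2^{-k}$, and with $\|w_i^{(k)}\|_p$ bounded below by a positive rational so injectivity and nonzeroness persist. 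One then arranges the $w_i^{(k)}$ to be norm-Cauchy in $k$, so their limits $\phi(\nu_i)$ are computable vectors that honestly sum to $\phi(\nu)$ and, by continuity of $\sigma_0$, are exactly disjointly supported. To force linear density of $\ran(\phi)$, the enumeration is interleaved so that each $R(n)$ is approximated arbitrarily well by rational combinations of node values.

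The main obstacle is exactly this bridge from the $\sigma_0 < 2^{-k}$ approximations to an \emph{exact} disintegration that is still computable as a map $S \to \ell^p$. It is not automatic that approximate splits can be upgraded to honest disjointly supported decompositions summing correctly to their parent; one must control approximation rates so that the $w_i^{(k)}$ converge in $\ell^p$-norm and so that the limit tree is itself a computable subset of $\N^*$. Verifying that the limit object satisfies all five disintegration conditions simultaneously (and that the range-density enumeration is compatible with the splitting process) is the technical heart of the argument and is where Proposition \ref{prop:sigma} must be leveraged most carefully.
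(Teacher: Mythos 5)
First, a point of orientation: the paper does not prove Theorem \ref{thm:comp.disint} at all; it is quoted as a black box from \cite{McNicholl.2016}, where its proof is the main technical construction of that paper. So your proposal has to be judged on its own merits. Your high-level strategy --- use Proposition \ref{prop:sigma} to turn disjointness of support into the vanishing of the computable quantity $\sigma_0$, build the tree by level-by-level splitting with $\sigma_0 < 2^{-k}$ as a semidecidable surrogate for exact disjointness, and interleave the processing of the $R(n)$ to get linear density --- is indeed the right skeleton and matches the spirit of the actual argument. But two steps you wave at are precisely where the real work lives, and as written they do not go through.

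The central gap is the sentence ``one then arranges the $w_i^{(k)}$ to be norm-Cauchy in $k$.'' There is no reason the searches at successive tolerances return nearby splits: a vector such as $e_0+e_1+e_2$ admits approximate splits near $(e_0,\,e_1+e_2)$, near $(e_0+e_1,\,e_2)$, and near $(e_1,\,e_0+e_2)$, all far apart in norm, and an unrestricted search may hop among them forever. To force convergence you must restrict the level-$(k+1)$ search to a neighborhood of the level-$k$ output, and to know that restricted search terminates you need a \emph{quantitative stability lemma}: an explicit modulus $\delta(\epsilon,p)$ such that whenever $\sigma_0(w_0,w_1)<\delta$ and $\norm{w_0+w_1-f}_p<\delta$, there exist exactly disjointly supported $g_0,g_1$ with $g_0+g_1=f$ and $\norm{g_i-w_i}_p<\epsilon$. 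Proposition \ref{prop:sigma} is purely qualitative ($\sigma_0=0$ iff disjointly supported) and does not yield such a modulus; proving one, with constants depending on $p$ (and behaving differently for $p<2$ and $p>2$), is the technical heart of \cite{McNicholl.2016}. Continuity of $\sigma_0$ at the limit, which you invoke, only helps \emph{after} convergence is secured. A second, independent gap is your construction of the full-support root: you propose to choose $c_{n+1}$ so that ``no coordinate previously known to be nonzero can be cancelled out,'' but the standard coordinates $v(j)=e_j^*(v)$ are not computable from an arbitrary presentation $(\ell^p,R)$ --- if they were, one could compute an isometric isomorphism onto the standard copy outright, contradicting the failure of computable categoricity that motivates the whole paper. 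Moreover, even granting knowledge of the support, a single scalar $c_{n+1}>0$ cannot avoid cancellation on an infinite support where $|f_n(j)|\to 0$. So both the splitting step and the root step need genuinely new ideas beyond what the proposal supplies.
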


\section{Preliminaries}\label{sec:prelim}

\subsection{Preliminaries from functional analysis}\label{sec:prelim::subsec:FA}

Let $1 \leq p < \infty$, and suppose $f$ is a unit atom of $\ell^p$ (i.e. an atom of norm 1).  Then, $f$ is also a unit vector of $\ell^q$ where $q$ is the conjugate of $p$.  So, $|f^*(g)| \leq \norm{g}_p$.  It also follows that $f^*(g)f \preceq g$ for all $g \in \ell^p$.  Furthermore, if $f^*(g) = 0$, then $f$ and $g$ are disjointly supported.  Finally, if $g$ is an atom of $\ell^p$, and if $f$ and $g$ are not disjointly supported, then $\supp(f) = \supp(g)$ and $f^*(g)f = g$.

Our proof of Theorem \ref{thm:main.2} will utilize the following.

\begin{lemma}\label{lm:recognize}
Suppose $1 \leq p < \infty$, and suppose $\phi : S \rightarrow \ell^p$ is a disintegration of $\ell^p$.  
Let $C \subseteq S$ be a chain so that whenever $\nu \in C$ is a nonterminal node of $S$, $C$ contains a child $\nu'$ of $\nu$ so that 
\[
\max\{\norm{\phi(\mu)}_p^p\ :\ \mu \in \nu_S^+\} - \norm{\phi(\nu')}_p^p < \min\{\frac{1}{2}\norm{\phi(\nu)}_p^p, 2^{-|\nu|}\}.
\]
Suppose $f$ is a unit atom of $\ell^p$.  
\begin{enumerate}
	\item \label{lm:recognize::itm:exists} If $f$ and $\inf \phi[C]$ are not disjointly supported, then there is a $\nu \in C$ so that 
	\begin{equation}
	\norm{\phi(\nu) - f^*(\phi(\nu))f}_p^p + \min\{\frac{1}{2}\norm{\phi(\nu)}_p^p, 2^{-|\nu|}\} < \norm{f^*(\phi(\nu))f}_p^p. \label{ineq:recognize}
	\end{equation}
	
	\item  \label{lm:recognize::itm:valid} If $\nu \in C$ satisfies \normalfont{(\ref{ineq:recognize})}, then $\inf \phi[C] = f^*(\phi(\nu))f$.  
\end{enumerate}
\end{lemma}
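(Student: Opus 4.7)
The plan is to exploit that a unit atom of $\ell^p$ must have the form $f = \lambda\, e_j$ with $|\lambda| = 1$ and $j \in \N$.  Under this form, $f^*(\phi(\nu))\, f = \phi(\nu)(j)\, e_j$, so that
\[
\|f^*(\phi(\nu))f\|_p^p = |\phi(\nu)(j)|^p, \qquad \|\phi(\nu) - f^*(\phi(\nu))f\|_p^p = \|\phi(\nu)\|_p^p - |\phi(\nu)(j)|^p,
\]
and (\ref{ineq:recognize}) becomes equivalent to the statement that the $j$-th coordinate of $\phi(\nu)$ carries strictly more than half of the $\ell^p$-mass, with slack $\epsilon(\nu)$:
\[
|\phi(\nu)(j)|^p > \tfrac{1}{2}\bigl(\|\phi(\nu)\|_p^p + \epsilon(\nu)\bigr).
\]
Since the hypothesized chain condition is strictly stronger than the almost-norm-maximizing condition, Theorem \ref{thm:lim.chains} applies to $C$: $\inf\phi[C]$ exists, is either $\mathbf{0}$ or an atom, and equals the $\ell^p$-limit of $\phi(\nu)$ as $\nu$ traverses $C$ in increasing order.

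For part (\ref{lm:recognize::itm:exists}), assume $\inf\phi[C] \neq \mathbf{0}$, so $\inf\phi[C] = \alpha\, e_{j_0}$ for some $\alpha \neq 0$ and $j_0 \in \N$.  Choose $f := (\alpha/|\alpha|)\, e_{j_0}$; then $\phi(\nu)(j_0) \to \alpha$ while $\|\phi(\nu)\|_p^p - |\phi(\nu)(j_0)|^p \to 0$, and since $\epsilon(\nu) \leq 2^{-|\nu|} \to 0$ along $C$, the equivalent dominance inequality holds at every sufficiently deep $\nu \in C$.

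For part (\ref{lm:recognize::itm:valid}), write $f = \lambda\, e_j$ and fix $\nu_0 \in C$ satisfying (\ref{ineq:recognize}).  I argue by induction down the chain that for every $\nu \in C$ with $\nu_0 \subseteq \nu$, $\phi(\nu)$ satisfies (\ref{ineq:recognize}) and $\phi(\nu)(j) = \phi(\nu_0)(j) \neq 0$.  First, $\phi(\nu_0)(j) \neq 0$: otherwise the right side of (\ref{ineq:recognize}) vanishes while the left is positive.  For the inductive step, let $\nu \in C$ be nonterminal in $S$ and satisfy the claim, and let $\nu'$ denote its child in $C$.  Summativity and disjoint supports of siblings give $\|\phi(\nu)\|_p^p = \sum_\mu \|\phi(\mu)\|_p^p$ over children $\mu$ of $\nu$ in $S$, and there is a unique child $\mu^*$ of $\nu$ with $j \in \supp \phi(\mu^*)$, satisfying $\phi(\mu^*)(j) = \phi(\nu)(j)$.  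Dominance at $\nu$ yields $\|\phi(\mu^*)\|_p^p \geq |\phi(\nu)(j)|^p > \tfrac{1}{2}(\|\phi(\nu)\|_p^p + \epsilon(\nu))$, so $\mu^*$ is the strictly largest-norm child.  The crux is to conclude $\nu' = \mu^*$: otherwise, disjointness of support with $\mu^*$ gives the upper bound
\[
\|\phi(\nu')\|_p^p \leq \|\phi(\nu)\|_p^p - \|\phi(\mu^*)\|_p^p < \tfrac{1}{2}\bigl(\|\phi(\nu)\|_p^p - \epsilon(\nu)\bigr),
\]
while the chain hypothesis together with the lower bound on $\|\phi(\mu^*)\|_p^p$ gives
\[
\|\phi(\nu')\|_p^p > \|\phi(\mu^*)\|_p^p - \min\bigl\{\|\phi(\nu)\|_p^p, 2^{-|\nu|}\bigr\} > \tfrac{1}{2}\bigl(\|\phi(\nu)\|_p^p + \epsilon(\nu)\bigr) - \min\bigl\{\|\phi(\nu)\|_p^p, 2^{-|\nu|}\bigr\},
\]
which together force $\epsilon(\nu) < \min\{\|\phi(\nu)\|_p^p, 2^{-|\nu|}\}$, impossible once $\epsilon(\nu)$ is identified (as the notation in the chain condition suggests) with $\min\{\|\phi(\nu)\|_p^p, 2^{-|\nu|}\}$.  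Thus $\nu' = \mu^*$, giving $\phi(\nu')(j) = \phi(\nu)(j)$; the propagation of (\ref{ineq:recognize}) to $\nu'$ follows because $\|\phi(\nu')\|_p^p \leq \|\phi(\nu)\|_p^p$, $\epsilon(\nu') \leq \epsilon(\nu)$, and the $j$-th coordinate is unchanged.

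The induction delivers $\phi(\nu_0)(j)\, e_j = \phi(\nu) \cdot \chi_{\{j\}} \preceq \phi(\nu)$ for every $\nu \in C$ with $\nu_0 \subseteq \nu$, hence $\phi(\nu_0)(j)\, e_j \preceq \inf\phi[C]$.  In particular $\inf\phi[C] \neq \mathbf{0}$, so by Theorem \ref{thm:lim.chains} it is an atom; comparing supports forces $\inf\phi[C] = \phi(\nu_0)(j)\, e_j = f^*(\phi(\nu_0))\, f$.  The main obstacle is the identification $\nu' = \mu^*$ in the inductive step: it is here that the strengthening of the chain condition (slack $\min\{\|\phi(\nu)\|_p^p, 2^{-|\nu|}\}$ instead of only $2^{-|\nu|}$) and the precise form of $\epsilon(\nu)$ are both decisive.
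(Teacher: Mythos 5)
Your proof is correct in substance, and for part (\ref{lm:recognize::itm:valid}) it takes a mildly different route from the paper's.  Your reduction of (\ref{ineq:recognize}) to the statement that coordinate $j$ carries strictly more than half of $\norm{\phi(\nu)}_p^p + \epsilon(\nu)$ is exactly the right reading, and your identification $\epsilon(\nu) = \min\{\norm{\phi(\nu)}_p^p, 2^{-|\nu|}\}$ matches the paper (the statement leaves $\epsilon$ undefined; the paper's proof defines it this way).  For part (\ref{lm:recognize::itm:valid}) the paper argues by minimal counterexample: it takes the least $\mu \in C$ with $h := f^*(\phi(\nu))f \not\preceq \phi(\mu)$, locates the sibling $\mu'$ of $\mu$ with $h \preceq \phi(\mu')$ (your $\mu^*$), and gets $\norm{h}_p^p \leq \norm{\phi(\mu')}_p^p \leq \norm{\phi(\mu)}_p^p + \epsilon(\mu^-) \leq \norm{\phi(\nu)-h}_p^p + \epsilon(\nu) < \norm{h}_p^p$ in one stroke.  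You instead run a forward induction showing the chain must select the $j$-supporting child at every level; the same two estimates (the dominance lower bound on $\norm{\phi(\mu^*)}_p^p$ and the chain condition's slack $\epsilon(\nu)$) do the work in both versions, so these are two organizations of one idea.  Your version has the minor advantage of making explicit that the dominance inequality propagates down the chain.

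Two small points.  First, in part (\ref{lm:recognize::itm:exists}) you only treat the case where $C$ is infinite (your argument needs $2^{-|\nu|} \to 0$ along $C$); if $C$ is finite, its maximal node $\nu$ is terminal in $S$, $\phi(\nu) = \inf\phi[C]$, and one verifies (\ref{ineq:recognize}) at that node directly, as the paper does.  Second, you replace the given unit atom $f$ by one chosen to be aligned with $\inf\phi[C]$.  As literally stated, part (\ref{lm:recognize::itm:exists}) quantifies over an arbitrary unit atom $f$, for which the conclusion fails when $\supp(f) \neq \supp(\inf\phi[C])$ (then $f^*(\phi(\nu)) \to 0$).  The paper's own proof makes the same tacit assumption — it asserts $f^*(g)f = g$, which requires $\supp(f) = \supp(g)$ — and the lemma is only ever applied inside a search over all basis atoms, so your reading is the intended one; but it should be flagged as a hypothesis rather than silently built into the choice of $f$.
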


\begin{proof}
Let $g = \inf \phi[C]$.  Let $\epsilon(\nu) = \min\{\frac{1}{2}\norm{\phi(\nu)}_p^p, 2^{-|\nu|}\}$.  
Thus, $\epsilon$ is decreasing (i.e. $\nu \subset \nu'$ implies $\epsilon(\nu) \leq \epsilon (\nu')$).  Since $\epsilon(\nu) \leq 2^{-|\nu|}$, $C$ is almost norm-maximizing.  
Therefore, $g$ is either $\mathbf{0}$ or an atom.  \\

\noindent(\ref{lm:recognize::itm:exists}): Suppose $g$ and $f$ are not disjointly supported.  Thus, $g \neq \mathbf{0}$.  Therefore, $g$ is an atom and so $f^*(g)f = g$.  

Suppose $C$ is finite.  It follows that $C$ contains a terminal node $\nu$ of $S$.  By Theorem \ref{thm:lim.chains}, $\phi(\nu) = g$.  Since $\epsilon(\nu) < \norm{\phi(\nu)}_p^p$, it follows that $\nu$ satisfies (\ref{ineq:recognize}).

Now, suppose $C$ is infinite.  By assumption, $\lim_{\nu \in C} \epsilon(\nu) = 0$.  
By Theorem \ref{thm:lim.chains}, $\lim_{\nu \in C} \phi(\nu) = g$ in the $\ell^p$-norm.  
Since $f^*$ is continuous, $\lim_{\nu \in C} f^*(\phi(\nu)) = f^*(g)$.  Thus, 
$\lim_{\nu \in C} f^*(\phi(\nu))f = g$.  The existence of a $\nu \in C$ that satisfies (\ref{ineq:recognize}) follows.\\

\noindent(\ref{lm:recognize::itm:valid}): Suppose $\nu \in C$ satisfies (\ref{ineq:recognize}).  Then, $f^*(\phi(\nu)) \neq 0$.  Let $h = f^*(\phi(\nu))f$.  Thus, $h$ is an atom and $h \preceq \phi(\nu)$.  
Since $h$ is nonzero, it suffices to show that $h \preceq \phi(\mu)$ for all $\mu \in C$.  
By way of contradiction, suppose $h \not \preceq \phi(\mu)$ for some $\mu \in C$.  Hence, $\nu \subset \mu$ and so $\mu^- \in C$.  Without loss of generality, assume $h \preceq \phi(\mu')$ for all $\mu' \subset \mu$.

Since $\phi$ is separating and summative, $h \preceq \phi(\mu')$ for some sibling $\mu'$ of $\mu$.  Therefore, $\norm{h}_p^p \leq \norm{\phi(\mu')}_p^p$.  At the same time, since $\mu^- \in C$, 
$\norm{\phi(\mu')}_p^p \leq \norm{\phi(\mu)}_p^p + \epsilon(\mu^-)$.  Seeing as $\phi$ is separating and summative, $\phi(\mu) \preceq \phi(\mu^-) - h$.  But, as $h \preceq \phi(\mu^-) \preceq \phi(\nu)$, 
$\phi(\mu^-) - h \preceq \phi(\nu) - h$ and so 
$\norm{\phi(\mu^-) - h}_p^p \leq \norm{\phi(\nu) - h}_p^p$.  Since $\epsilon$ is decreasing, 
$\epsilon(\mu^-) \leq \epsilon(\nu)$, and so 
\[
\norm{h}_p^p \leq \norm{\phi(\nu) - h}_p^p + \epsilon(\nu) < \norm{h}_p^p
\]
which is a contradiction.
\end{proof}

\subsection{Preliminaries from computable analysis}\label{sec:prelim::subsec:CA}

We first extend some of the results in \cite{McNicholl.2017} on partitioning the domain of a disintegration into almost norm-maximizing chains.

\begin{lemma}\label{lm:comp.child}
Suppose $p \geq 1$ is computable and that $(\ell^p)^\#$ is a computable presenttion of $\ell^p$.  
Suppose also that $\phi$ is a computable disintegration of $(\ell^p)^\#$.  Then, from a nonterminal node $\nu$ of $\dom(\phi)$ and a positive rational number $\epsilon$ it is possible to compute a child $\nu'$ of $\nu$ in $\dom(\phi)$ so that 
\[
\max_\mu \norm{\phi(\mu)}_p^p - \norm{\phi(\nu')}_p^p < \epsilon
\]
where $\mu$ ranges over all children of $\nu$ in $\dom(\phi)$.
\end{lemma}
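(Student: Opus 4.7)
The plan is to exploit the separating and summative properties of $\phi$. Since the children $\mu$ of $\nu$ in $S := \dom(\phi)$ have pairwise disjoint supports and $\phi(\nu) = \sum_\mu \phi(\mu)$, taking $p$-norms gives
\[
\norm{\phi(\nu)}_p^p = \sum_\mu \norm{\phi(\mu)}_p^p.
\]
Thus the individual child norms form a nonnegative convergent series with sum $M := \norm{\phi(\nu)}_p^p$, which is a computable real. Because $\phi$ is a computable disintegration, $S$ is a computable tree and, uniformly in $n$, we can decide whether $\nu \cat (n) \in S$ and approximate $a_n := \norm{\phi(\nu \cat (n))}_p^p$ to arbitrary precision (setting $a_n := 0$ when $\nu \cat (n) \notin S$).

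The algorithm then proceeds in two phases. First, effectively search for an $N$ at which a computed rational upper bound on $M - \sum_{n \leq N} a_n$ drops below $\epsilon/2$; such an $N$ exists because the tail tends to zero. This choice of $N$ guarantees that $a_n < \epsilon/2$ for every $n > N$. Second, among the finitely many indices $n \leq N$ for which $\nu \cat (n) \in S$, approximate each $a_n$ to within $\epsilon/8$ and let $k^\star$ be one that maximizes the approximation; return $\nu' := \nu \cat (k^\star)$.

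Correctness follows easily. For $n \leq N$ with $\nu \cat (n) \in S$, the $\epsilon/8$-accuracy yields $a_{k^\star} \geq a_n - \epsilon/4$, and for $n > N$ with $\nu \cat (n) \in S$ we have $a_n < \epsilon/2 \leq a_{k^\star} + \epsilon$ (the latter holding since $a_{k^\star} \geq 0$). Hence $\max_\mu \norm{\phi(\mu)}_p^p - \norm{\phi(\nu')}_p^p < \epsilon$, as required. The one mildly technical obstacle is justifying the termination of the first phase: one must combine an upper approximation of $M$ with lower approximations of the individual $a_n$ to sufficient precision to produce a rational upper bound on $M - \sum_{n \leq N} a_n$, and then invoke the convergence of the series to see that these upper bounds must eventually fall below $\epsilon/2$. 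A minor accompanying care is needed to ensure the output set $\{n \leq N : \nu \cat (n) \in S\}$ is nonempty, which is automatic once $\epsilon/2 < M$ (and handled by enlarging $N$ otherwise).
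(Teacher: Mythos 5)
Your overall strategy is the one the paper uses: exploit the separating and summative properties to get $\norm{\phi(\nu)}_p^p = \sum_\mu \norm{\phi(\mu)}_p^p$ over the children, wait until the children seen so far account for all but a fraction of $\epsilon$ of this mass, and then output an approximate maximizer among them. There is, however, one unjustified step: you assert that $\dom(\phi)$ is a \emph{computable} tree, so that membership of $\nu\cat (n)$ in $\dom(\phi)$ is decidable uniformly in $n$. The hypothesis that $\phi$ is a computable disintegration only yields that $\dom(\phi)$ is c.e.\ (this is exactly what the paper's proof extracts and all that it uses), so you may not set $a_n := 0$ for $n$ with $\nu\cat (n) \notin \dom(\phi)$, and you cannot compute the finite set $\{n \leq N : \nu\cat (n) \in \dom(\phi)\}$ over which your second phase maximizes. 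As written, that second phase would fail.

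The gap is repairable, and the repair essentially turns your argument into the paper's: replace ``children with index at most $N$'' by ``children enumerated into $\dom(\phi)$ by stage $s$.'' A lower bound on the sum of the $a_n$ over the children enumerated so far is still computable, so your first phase can instead search for a stage $s$ at which $\norm{\phi(\nu)}_p^p$ minus this partial sum is witnessed to be below $\epsilon/2$; at that point \emph{every} child not yet enumerated, whatever its index, satisfies $\norm{\phi(\mu)}_p^p < \epsilon/2$, and your second phase can take the approximate maximizer over the explicitly known finite set of enumerated children. With that change your $\epsilon$-accounting (an $\epsilon/4$ loss among the compared children from the $\epsilon/8$-approximations, at most $\epsilon/2$ from the uncompared ones, and nonnegativity of $a_{k^\star}$) goes through, as does your remark that nonemptiness is ensured by first waiting for at least one child to appear, which must happen since $\nu$ is nonterminal.
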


\begin{proof}
Let $S = \dom(\phi)$.  Since $\phi$ is computable, $S$ is c.e..  For each $s$, let $\nu^+[s]$ denote the 
set of children of $\nu$ that have been enumerated into $S$ by the end of stage $s$.  

Wait until a child $\nu_0'$ of $\nu$ in $S$ is enumerated.  Then, wait for a stage $s$ so that 
\[
\norm{\phi(\nu_0')}_p^p  > \norm{\phi(\nu)}_p^p - \sum_{\mu \in \nu^+[s]} \norm{\phi(\mu)}_p^p
\]
for some child $\nu_0'$ of $\nu$ in $S$.  As $\phi$ is summative, $\norm{\phi(\nu_0')}_p^p > \norm{\phi(\mu)}_p^p$ whenever 
$\mu$ is a child of $\nu$ in $S$ so that $\mu \not \in \nu^+[s]$.  We then compute and output a $\nu' \in \nu^+[s]$ so that $\norm{\phi(\nu')}_p^p + \epsilon > \norm{\phi(\mu)}_p^p$ for all $\mu \in \nu^+[s]$.  
\end{proof}

\begin{theorem}\label{thm:comp.partition}
Suppose $p \geq 1$ is computable and let $(\ell^p)^\#$ be a computable presentation of $\ell^p$.  
Suppose also that $\phi$ is a computable disintegration of $(\ell^p)^\#$ and that $\epsilon : \dom(\phi) \rightarrow (0, \infty)$ is lower semicomputable.  Then, there is a partition $\{C_n\}_{n \in \N}$ of $\dom(\phi)$ into uniformly c.e. chains so that whenever $\nu \in C_n$ is a nonterminal node of $\dom(\phi)$, $C_n$ contains a child $\nu'$ of $\nu$ so that 
\[
\max_\mu \norm{\phi(\mu)}_p^p - \norm{\phi(\nu')}_p^p < \epsilon(\nu)
\]
where $\mu$ ranges over all children of $\nu$ in $\dom(\phi)$.
\end{theorem}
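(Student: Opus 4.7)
The plan is to construct a partial computable function $\chi$ on $S := \dom(\phi)$ that assigns to each nonterminal node a suitably chosen child, and then read off the chains as the orbits of $\chi$. To define $\chi(\nu)$: since $\epsilon(\nu) > 0$ and $\epsilon$ is lower semicomputable, search the enumeration of the left cut of $\epsilon(\nu)$ until a positive rational $q$ appears, then apply the procedure of Lemma \ref{lm:comp.child} to $(\nu, q)$ to obtain a child $\chi(\nu)$ of $\nu$ in $S$ satisfying $\max_\mu \norm{\phi(\mu)}_p^p - \norm{\phi(\chi(\nu))}_p^p < q < \epsilon(\nu)$. This procedure halts precisely when $\nu$ has at least one child in $S$, so $\chi$ is partial computable with domain equal to the set of nonterminal nodes of $S$. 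The crucial observation is that for every $\nu \in S \setminus \{\lambda\}$, the parent $\nu^-$ lies in $S$ and has $\nu$ as a child, so $\nu^-$ is automatically nonterminal and $\chi(\nu^-)$ is effectively computable from such $\nu$.

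Call $\nu \in S$ a \emph{chain root} if $\nu = \lambda$ or $\chi(\nu^-) \neq \nu$. Every node of $S$ has a unique chain-root ancestor, reached by climbing parents until the first chain root is encountered. I would enumerate the chain roots as $\nu_0, \nu_1, \ldots$ by effectively enumerating $S$ and applying the chain-root test to each listed node. Define $C_n$ to be the $\chi$-orbit of $\nu_n$, realized as the partial computable function $c_n$ with $c_n(0) = \nu_n$ and $c_n(k+1) \simeq \chi(c_n(k))$. Since $n \mapsto \nu_n$ is computable and $\chi$ is partial computable, the family $\{c_n\}$ is uniformly partial computable. The $C_n$ partition $S$ because each node lies in the orbit of its unique chain-root ancestor, and at any nonterminal $\nu \in C_n$ the successor $\chi(\nu)$ is a child of $\nu$ in $S$ meeting the inequality demanded by the theorem.

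The main technical point---and the chief obstacle---is verifying that chain roots can be identified effectively as $S$ is enumerated. This rests entirely on the observation above that $\nu^-$ is always nonterminal when $\nu \in S \setminus \{\lambda\}$; otherwise deciding whether $\chi(\nu^-) = \nu$ would require a priori tree analysis that is unavailable because $S$ is only c.e.. Once this is secured, the uniform computability of $\{C_n\}$ and the partition property follow in a routine fashion from the partial computability of $\chi$ and the fact that chain-root ancestors can be computed by a bounded parent-climbing search.
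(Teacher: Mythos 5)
Your proposal is correct and follows essentially the same route as the paper: select a child of each nonterminal node via Lemma \ref{lm:comp.child} (the paper calls your $\chi$ by the name $\psi$), observe that the parent of any non-root node of $\dom(\phi)$ is automatically nonterminal so the "chain root" test $\nu \neq \psi(\nu^-)$ is effective, and take the orbits of the chain roots as the $C_n$. Your explicit step of first extracting a positive rational $q < \epsilon(\nu)$ from the lower semicomputability of $\epsilon$ is a detail the paper leaves implicit, but otherwise the arguments coincide.
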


\begin{proof}
Let $S = \dom(\phi)$.  We define a function $\psi : S \rightarrow S$ as follows.  
By Lemma \ref{lm:comp.child}, from a nonterminal node $\nu$ of $S$ it is possible to compute a child $\nu'$ of $\nu$ in $S$ so that  
\[
\max_\mu \norm{\phi(\mu)}_p^p - \norm{\phi(\nu')}_p^p < \epsilon(\nu)
\]
where $\mu$ ranges over all children of $\nu$ in $\dom(\phi)$; let $\psi(\nu) = \nu'$.  
Then, the orbits of $\psi$ form a decomposition of $S$ into chains with the required properties.  
(Recall that an \emph{orbit} of a function $f : X \rightarrow X$ is a set of the form 
$\{f^n(x_0)\ :\ n \in \N\}$.). 
Let 
\[
U = \{\emptyset\}\ \cup\ \{\nu \in S - \{\emptyset\}\ :\ \nu \neq \psi(\nu^-)\}.
\]
Then, $U$ is computable.  Let $\{\upsilon_n\}_{n \in \N}$ be an effective enumeration of $U$.  
Let $C_n$ be the $\psi$-orbit of $\upsilon_n$.  It follows that $\{C_n\}_{n \in \N}$ is a one-to-one enumeration of the orbits of $\psi$ and that $C_n$ is c.e. uniformly in $n$.
\end{proof}

The proof of Theorem \ref{thm:main.2} will utilize the following.

\begin{proposition}\label{prop:f*.comp} 
Suppose $p$ is a computable real so that $p \geq 1$, and let $(\ell^p)^\#$ be a computable presentation of $\ell^p$.  Suppose $f$ is a unit atom of $\ell^p$.  If $f$ is a computable vector of $(\ell^p)^\#$, then 
$f^*$ is a computable functional of $(\ell^p)^\#$.
\end{proposition}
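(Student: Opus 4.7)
The plan is to realize each $f^*(R(n))$ as the unique zero of an explicit computable function on a computable compact disk, then invoke Proposition \ref{prop:zero.find}. By Theorem \ref{thm:comp.map}, it suffices to produce the sequence $\{f^*(R(n))\}_{n \in \N}$ uniformly in $n$, where $R$ is the structure of $(\ell^p)^\#$.

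Since $f$ is a unit atom, $f = \lambda e_j$ for some $j \in \N$ and unimodular $\lambda \in \C$. Expanding in the standard basis gives
\[
\norm{g - cf}_p^p = \sum_{k \neq j} |g(k)|^p + |g(j) - c\lambda|^p,
\]
which is strictly minimized over $c \in \C$ at $c = g(j)\overline{\lambda} = f^*(g)$, with minimum value $M(g) := \sum_{k \neq j}|g(k)|^p$. Consequently
\[
\hat F_g(c) := \norm{g - cf}_p^p - M(g) = |g(j) - c\lambda|^p
\]
is a non-negative function of $c$ that vanishes exactly at $c = f^*(g)$.

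The heart of the argument is to show that $M(g)$ is a computable real uniformly in $g$. Since $|f^*(g)| = |g(j)| \leq \norm{g}_p$, the minimizer lies in the computable compact disk $K_g := \overline{D(0; \norm{g}_p + 1)}$, so $M(g)$ is just the minimum of the computable map $c \mapsto \norm{g - cf}_p^p$ over $K_g$. The reverse triangle inequality supplies a uniform computable Lipschitz modulus of continuity for this map, which in turn allows one to sandwich $M(g)$ effectively via grid sampling and single-point evaluation.

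With $M(g)$ in hand, $\hat F_g$ is a computable function on $K_g$ with a unique zero, so Proposition \ref{prop:zero.find} returns $f^*(g)$ as a computable complex number, uniformly in $g$ and in $f$. Specialising to $g = R(n)$ and appealing to Theorem \ref{thm:comp.map} completes the proof. I expect the main obstacle to be purely bookkeeping — verifying that the norm evaluations, the Lipschitz modulus, and the grid sampling used to extract $M(g)$ all remain uniform in the index $n$ of the generator $g = R(n)$; no further ideas beyond the identity for $\hat F_g$ and standard computable-analysis facts about minima on compact sets should be required.
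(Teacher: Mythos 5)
Your proof is correct, but it routes around the lemma the paper actually leans on. The paper (for $p\neq 2$) characterizes $f^*(g)$ as the unique scalar $\lambda$ for which $f$ and $g-\lambda f$ are disjointly supported, detects disjointness via the Hanner--Lamperti functional $\sigma_0$ of Proposition \ref{prop:sigma}, and feeds $\lambda\mapsto\sigma_0(f,g-\lambda f)$ to Proposition \ref{prop:zero.find}; since Proposition \ref{prop:sigma} fails at $p=2$, that case is handled separately by the polar identity, and an initial dichotomy (``is $\norm{R(j)}_p$ witnessed positive or witnessed small?'') is needed to secure a positive computable radius for the zero-finding disk. You instead characterize $f^*(g)$ as the unique minimizer of $c\mapsto\norm{g-cf}_p^p$, compute the minimum value $M(g)$ by the standard grid-plus-Lipschitz-modulus argument on the computable compact disk $\overline{D(0;\norm{g}_p+1)}$, and then zero-find on $\hat F_g=\norm{g-cf}_p^p-M(g)$. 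This buys you two things: the argument is uniform in $p$ (no case split at $p=2$, since your basis-expansion identity holds for all $p\geq 1$), and the radius $\norm{g}_p+1$ is always computably positive, so the paper's preliminary dichotomy on $\norm{R(j)}_p$ is unnecessary. The cost is the extra step of computing a minimum over a compact set, which is standard but should be stated as an explicit citation or short argument rather than left as ``bookkeeping''; with that filled in, the proof is complete.
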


\begin{proof}
Suppose $p = 2$.  Thus, $p$ is its own conjugate.  Since $f$ is a computable vector of $(\ell^p)^\#$, it follows from the polar identity that $f^*$ is a computable functional on $(\ell^p)^\#$.  

Suppose $p \neq 2$.  Let $(\ell^p)^\# = (\ell^p, R)$.  By Theorem \ref{thm:comp.map}, it suffices to show that $\{f^*(R(j))\}_{j \in \N}$ is a computable sequence of scalars.  Let $j,k \in \N$ be given as input.  Compute approximations of $\norm{R(j)}_p$ until 
it is witnessed that $\norm{R(j)}_p > 0$ or it is witnessed that $\norm{R(j)}_p < 2^{-k}$.  In the latter case, since $|f^*(R(j))| \leq \norm{R(j)}_p$, we can output $0$.  Suppose it is witnessed that 
$\norm{R(j)}_p > 0$.  Let $\phi(\lambda) = \sigma_0(f, R(j) - \lambda f)$ for each $\lambda \in \C$.  
It then follows from the remarks in Section \ref{sec:prelim::subsec:FA} that $f^*(R(j))$ is the unique scalar $\lambda$ so that $\phi(\lambda) = 0$ and that the modulus of this scalar is no larger than $\norm{R(j)}_p$.  We can then deduce from Proposition \ref{prop:zero.find} that it is now possible to compute a rational point $\hat{\lambda}$ so that 
$D(\hat{\lambda}; 2^{-k})$ contains a zero of $\phi$.  So, we output $\hat{\lambda}$.  In either case, we have
computed a rational point that is less than $2^{-k}$ from $f^*(R(j))$.  Hence, $\{f^*(R(j))\}_{j \in \N}$ is computable.
\end{proof}

\section{A compression theorem}\label{sec:compression}

Our proof of Theorem \ref{thm:main.2} will utilize the following theorem which we believe is interesting in its own right.  Roughly speaking, it gives conditions under which the information in a sequence of reals can be compressed into a single real.

\begin{theorem}\label{thm:compression}  Let $\{r_n\}_{n \in \N}$ be a sequence of real numbers.
\begin{enumerate}
	\item If $\{r_n\}_{n \in \N}$ is uniformly right-c.e., then there is a right-c.e. real $r$ so that the join of the left Dedekind cuts of the $r_n$'s is enumeration-equivalent to the left Dedekind cut of $r$.  \label{thm:compression::itm:right.ce}
	
	\item If $\{r_n\}_{n \in \N}$ is uniformly left-c.e., then there is a left-c.e. real $r$ so that the join of the right Dedekind cuts of the $r_n$'s is enumeration-equivalent to the right Dedekind cut of $r$. \label{thm:compression::itm:left.ce}
\end{enumerate}
\end{theorem}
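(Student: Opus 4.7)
The plan is to prove part (\ref{thm:compression::itm:right.ce}) directly; part (\ref{thm:compression::itm:left.ce}) follows by replacing each $r_n$ with $-r_n$. By first applying a fixed computable order-preserving homeomorphism $\R \to (0,1)$ (this preserves uniform right-c.e.-ness and the e-degree of the left Dedekind cut, uniformly in $n$), I may assume each $r_n \in (0,1)$. Fix a uniformly computable array of rationals $r_n^s \downarrow r_n$ and set
\[
r = \sum_{n=0}^\infty 2^{-(n+1)} r_n.
\]
The rationals $\hat r^s = \sum_{n \leq s} 2^{-(n+1)} r_n^s + 2^{-(s+1)}$ form a computable non-increasing sequence with $\hat r^s \downarrow r$, so $r$ is right-c.e. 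Write $L_n$ for the left Dedekind cut of $r_n$ and $L_r$ for that of $r$.

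For the reduction $L_r \leq_e \bigoplus_n L_n$, I would enumerate the axiom $(q, \{(n, u_n) : n \leq N\})$ whenever the $u_n$ are rationals with $q < \sum_{n \leq N} 2^{-(n+1)} u_n$. Validity uses $r_n \geq 0$ to discard the tail of the series; conversely, any $q < r$ admits such an axiom because the partial sums converge to $r$ and each $L_n$ is dense below $r_n$.

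The main obstacle is the reverse reduction $\bigoplus_n L_n \leq_e L_r$, because positive information from $L_r$ bounds $r$ globally but does not directly isolate any $r_{n_0}$. The key point is that the right-c.e.-ness of each $r_n$ furnishes uniformly computable upper bounds $r_n^s$ requiring no oracle, and these can be combined with a lower bound on $r$ extracted from a finite nonempty $D \subseteq L_r$. Concretely, for any $N \geq n_0$ and any $s$,
\[
r_{n_0} \;\geq\; 2^{n_0+1}\left[\,r - 2^{-(N+1)} - \sum_{n \leq N,\, n \neq n_0} 2^{-(n+1)} r_n^s\,\right] \;>\; 2^{n_0+1}\left[\,\max D - 2^{-(N+1)} - \sum_{n \leq N,\, n \neq n_0} 2^{-(n+1)} r_n^s\,\right],
\]
using $r_n < 1$ to bound the tail, $r_n \leq r_n^s$ on the head, and $r > \max D$. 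I would therefore enumerate the axiom $((n_0, q), D)$ whenever some choice of $(N, s)$ makes the final bracketed expression (times $2^{n_0+1}$) exceed $q$---a c.e.\ condition. Conversely, given $q < r_{n_0}$, as $N \to \infty$, $s \to \infty$, and $\max D$ is pushed toward $r$ (possible since $L_r$ is dense below $r$), the bracketed quantity tends to $2^{-(n_0+1)} r_{n_0}$ and so eventually its $2^{n_0+1}$-multiple exceeds $q$, yielding a valid certificate and completing the reduction.
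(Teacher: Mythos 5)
Your proposal is correct and takes essentially the same route as the paper: both pass to a bounded interval, form the geometrically weighted sum $r=\sum_n 2^{-(n+1)}r_n$, get $L_r\leq_e\bigoplus_n L_n$ from the convergent partial sums, and recover each $L_{n_0}$ from $L_r$ by subtracting computable upper bounds on the remaining terms from lower bounds on $r$ (the paper packages these two reductions as Proposition~\ref{prop:red.to.term} and Corollary~\ref{cor:red.to.sum} via a modulus of summability, whereas you write the enumeration operators out explicitly). The only point worth adding is that you should truncate the approximations so that $r_n^s\leq 1$, which is needed for $\hat r^s\to r$.
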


Our proof of Theorem \ref{thm:compression} will employ the following definition.

\begin{definition}\label{def:mod.summ}
Suppose $\{r_n\}_{n =0}^\infty$ is a sequence of real numbers.
A \emph{modulus of summability} for $\{r_n\}_{n \in \N}$ is a function $f : \N \rightarrow \N$
so that $\left| \sum_{n = N_0}^\infty r_n \right| < 2^{-k}$ whenever $k \in \N$ and $N_0 \geq f(k)$.
\end{definition}

We note that if a sequence of reals has a modulus of summability, then its tails form a Cauchy sequence and so its partial sums form a Cauchy sequence; thus, it is summable.  

We now come to our first step toward proving Theorem \ref{thm:compression}.

\begin{proposition}\label{prop:red.to.term}
Suppose $f$ is a computable modulus of summability for $\{r_n\}_{n \in \N}$.  
\begin{enumerate}
	\item The left Dedekind cut of $\sum_{n = 0}^\infty r_n$ is enumeration-reducible to the join of the left Dedekind cuts of the $r_n$'s.  \label{prop:red.to.term::itm:left}
	
	\item The right Dedekind cut of $\sum_{n = 0}^\infty r_n$ is enumeration-reducible to the join of the right Dedekind cuts of the $r_n$'s. \label{prop:red.to.term::itm:right}
\end{enumerate}
\end{proposition}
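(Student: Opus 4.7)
My plan is to prove part (\ref{prop:red.to.term::itm:left}); part (\ref{prop:red.to.term::itm:right}) will follow by a symmetric argument, reversing the inequalities that compare $q$ to $S$ and to the $r_n$'s. Set $S = \sum_{n=0}^\infty r_n$ and $A_N = \sum_{n=0}^{N-1} r_n$. The only use I will make of $f$ is that from it one computes, uniformly in $k$, an index $N(k)$ such that $|S - A_{N(k)}| < 2^{-k}$.

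The entire reduction will rest on one characterization, which I will establish first: a rational $q$ lies in the left Dedekind cut of $S$ if and only if there exist $k \in \N$ and rationals $q_0, \ldots, q_{N(k)-1}$, with each $q_n$ in the left Dedekind cut of $r_n$, such that $\sum_{n < N(k)} q_n > q + 2^{-k}$. The ``$\Leftarrow$'' direction will be a short chain of inequalities: the strict bounds $q_n < r_n$ give $\sum_{n < N(k)} q_n < A_{N(k)}$, hence $A_{N(k)} > q + 2^{-k}$, and combining with the modulus bound $S > A_{N(k)} - 2^{-k}$ yields $S > q$. The ``$\Rightarrow$'' direction is where the $\epsilon$-bookkeeping lives: given $q < S$, I fix $k$ so large that $S - q > 3 \cdot 2^{-k}$, which forces $A_{N(k)} > q + 2 \cdot 2^{-k}$, and then approximate each $r_n$ from below by a rational $q_n$ within $2^{-k}/N(k)$ of $r_n$, so that the aggregate error is at most $2^{-k}$ and $\sum_{n < N(k)} q_n > q + 2^{-k}$ follows.

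With the characterization in hand, the enumeration reduction is mechanical. Given an enumeration of the join of the left Dedekind cuts of the $r_n$'s, the e-operator searches over tuples $(k, q_0, \ldots, q_{N(k)-1})$ whose entries have appeared in the appropriate coordinates of the given enumeration, computes $\sum_{n < N(k)} q_n$ in $\Q$, and places $q$ into the left cut of $S$ whenever $\sum_{n < N(k)} q_n > q + 2^{-k}$. I do not expect a substantive obstacle; the only point requiring care is the per-coordinate approximation error in the ``$\Rightarrow$'' direction, which must be chosen small enough to keep the aggregate slack below $2^{-k}$.
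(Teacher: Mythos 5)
Your proposal is correct and follows essentially the same route as the paper: both reduce to enumerating the left cut of a partial sum $A_{N(k)}$ via rationals drawn from the left cuts of the individual $r_n$'s, and then use the modulus bound $|S - A_{N(k)}| < 2^{-k}$ to shift down by $2^{-k}$ and conclude membership in the left cut of $S$. Your two-directional characterization is just a slightly more explicit unpacking of the paper's verification that the enumeration operator is sound and complete.
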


\begin{proof}
Let $r = \sum_{n = 0}^\infty r_n$. 

Given an enumeration of the left Dedekind cuts of the $r_n$'s, we can compute an enumeration of the 
left Dedekind cut of $\sum_{n = 0}^{N_0} r_n$ uniformly in $N_0$.  Begin cycling through all rational numbers and all pairs of natural numbers.  Whenever $R \in \Q$ and $N_0, k \in \N$ are found so that $R < \sum_{n = 0}^{N_0} r_n$ and $N_0 \geq f(k)$, begin enumerating 
all rational numbers smaller than $R - 2^{-k}$.  Every rational number thus enumerated is smaller than 
$r$.  Suppose $q < r$.  Choose $k$ so that $2^{-k} < \frac{1}{2}(r - q)$.  Choose $N_0$ so that 
$N_0 \geq f(k)$ and so that $\sum_{n = 0}^{N_0} r_n > \frac{1}{2}(r + q)$.  Then, $q < \sum_{n = 0}^{N_0} r_n - 2^{-k}$, and so $q < R - 2^{-k}$ whenever $R$ is a number in $(q + 2^{-k}, \sum_{n = 0}^{N_0} r_n)$.  It follows that every number in the left Dedekind cut of $r$ is enumerated by this process.

Part (\ref{prop:red.to.term::itm:right}) follows from part (\ref{prop:red.to.term::itm:left}).
\end{proof}

\begin{corollary}\label{cor:red.to.sum}
Suppose $f$ is a computable modulus of summability for $\{r_n\}_{n \in \N}$, and let $r = \sum_{n = 0}^\infty r_n$.
\begin{enumerate}
	\item If $\{r_n\}_{n \in \N}$ is uniformly left-c.e., then the right Dedekind cut of $r_n$ is enumeration-reducible to the right Dedekind cut of $r$ uniformly in $n$. \label{cor:red.to.sum::itm:left.right}
	
	\item If $\{r_n\}_{n \in \N}$ is uniformly right-c.e., then the left Dedekind cut of $r_n$ is enumeration-reducible to the left Dedekind cut of $r$ uniformly in $n$.   \label{cor:red.to.sum::itm:right.left} 	
\end{enumerate}
\end{corollary}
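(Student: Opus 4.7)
The plan is to prove part (\ref{cor:red.to.sum::itm:left.right}) directly, and then observe that part (\ref{cor:red.to.sum::itm:right.left}) follows from the dual construction. The key identity is $r_n = r - \sum_{m \neq n} r_m$, so a rational $q$ lies in the right Dedekind cut of $r_n$ once we can certify $q > r - L$ for some rational $L < \sum_{m \neq n} r_m$. The right Dedekind cut of $r$, which is the oracle, supplies rationals $s > r$, and the uniformly left-c.e.\ data together with the modulus $f$ will produce such an $L$ effectively.

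Concretely, fix $n$ and build the enumeration operator as follows. Search recursively-enumerably for a $k \in \N$, an integer $N \geq \max\{n+1, f(k)\}$, and rationals $L_m$ enumerated as being in the left Dedekind cut of $r_m$ for each $m < N$ with $m \neq n$. Set
\[
L \;=\; \sum_{m < N,\, m \neq n} L_m \;-\; 2^{-k}.
\]
For each such $L$, each rational $s$, and each rational $q \geq s - L$, include the axiom $(q, \{s\})$ in the operator. Because $N > n$, the splitting $\sum_{m \neq n} r_m = \sum_{m < N,\, m \neq n} r_m + \sum_{m \geq N} r_m$ together with the tail bound $\sum_{m \geq N} r_m > -2^{-k}$ from the modulus give $L < \sum_{m \neq n} r_m$. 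Hence, whenever $s$ is in the right cut of $r$, $q \geq s - L > r - \sum_{m \neq n} r_m = r_n$, which is soundness.

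For completeness, given a rational $q > r_n$, I would choose $k$ so that $3 \cdot 2^{-k} < q - r_n$ and set $N = \max\{n+1, f(k)\}$. The modulus yields $\sum_{m < N,\, m \neq n} r_m > (r - r_n) - 2^{-k}$, so by picking rational $L_m < r_m$ sufficiently close to $r_m$ (possible by the uniform left-c.e.\ hypothesis) and a rational $s > r$ sufficiently close to $r$ (available from the oracle), one arranges $s - L < r_n + 3 \cdot 2^{-k} < q$, and then the axiom enumerates $q$. The entire construction is uniform in $n$, since $k \mapsto \max\{n+1, f(k)\}$ is.

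Part (\ref{cor:red.to.sum::itm:right.left}) is symmetric: swap ``left'' and ``right'' throughout, replace each $L_m$ by a rational upper bound $U_m > r_m$ furnished by the uniformly right-c.e.\ data, replace the $-2^{-k}$ by $+2^{-k}$, and include the axiom $(q, \{s\})$ whenever $q \leq s - U$ and $s$ is drawn from the left cut of $r$. The only obstacle is keeping the sign conventions straight; the analytic substance is entirely carried by the uniform tail bound supplied by the modulus of summability.
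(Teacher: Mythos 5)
Your proof is correct and follows essentially the same route as the paper's: both rest on the identity $r_n = r - \sum_{m \neq n} r_m$ together with the fact that the tail sum is left-c.e.\ (resp.\ right-c.e.) via the computable modulus of summability. The only difference is that the paper obtains the latter fact by citing Proposition~\ref{prop:red.to.term} directly, whereas you unfold that application into an explicit enumeration operator; the substance is identical.
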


\begin{proof}
Suppose $\{r_n\}_{n \in \N}$ is uniformly left-c.e..  Without loss of generality, suppose $n = 0$.  By Proposition \ref{prop:red.to.term}, $r - r_0 = \sum_{n = 1}^\infty r_n$ is left-c.e..  So, since $r_0 = r - (r - r_0)$, from an enumeration of the right Dedekind cut of $r$ we can compute an enumeration of the right Dedekind cut of $r_0$.   Part (\ref{cor:red.to.sum::itm:right.left}) follows from part (\ref{cor:red.to.sum::itm:left.right}).
\end{proof}

\begin{proof}[Proof of Theorem \ref{thm:compression}:]
Suppose $\{r_n\}_{n \in \N}$ is uniformly right-c.e..  We first consider the case where $\{r_n\}_{n \in \N}$ is bounded.   Suppose $M$ is a rational number so that $M > r_n$ for all $n$.  Let $r'_n = 2^{-n}M^{-1} r_n$, and let $f(k) = k+2$.  It follows that $\{r_n'\}_{n = 0}^\infty$ is uniformly right-c.e. and that $f$ is a computable modulus of summability for this sequence.  Let $r = \sum_{n = 0}^\infty r_n'$.  Thus, by Corollary \ref{cor:red.to.sum}, the left Dedekind cut of $r_n'$ is enumeration-reducible to the left Dedekind cut of $r$ uniformly in $n$.  So, the join of these left Dedekind cuts is enumeration reducible to the left Dedekind cut of $r$.  By Proposition \ref{prop:red.to.term}, the left Dedekind cut of $r$ is enumeration-equivalent to the join of the left Dedekind cuts of $r_0', r_1', \ldots$.  Therefore, the left Dedekind cut of $r_n'$ is enumeration-equivalent to the left Dedekind cut of $r_n$ uniformly in $n$. 

If $\{r_n\}_{n \in \N}$ is not bounded, then apply the above procedure to $\{\arctan(r_n)\}_{n \in \N}$.  (Here, we use the fact that $\arctan$ is increasing, bounded, and computable.)

Part (\ref{thm:compression::itm:left.ce}) follows from part (\ref{thm:compression::itm:right.ce}).
\end{proof}

\section{Every c.e. degree is a degree of linear isometry}\label{sec:proof.thm.2.part.1}

Suppose $p$ is a computable real so that $p \geq 1$ and so that $p \neq 2$.  Let $C$ be a c.e. set. Without loss of generality, we can assume $C$ is incomputable.  Let $\{c_n\}_{n \in \mathbb{N}}$ be a one-to-one effective enumeration of $C$. 

For each $n \in \N$, let 
\[
R(n) = \left\{ \begin{array}{cc}
			e_{n} + e_{n + 1} & \mbox{if $n$ even}\\
			e_{2c_{(n-1)/2}} & \mbox{if $n$ odd}
			\end{array}
			\right.
\]
Let $\mathcal{B}$ denote the closed linear span of $\ran(R)$, and let $\mathcal{B}^\# = (\mathcal{B}, R)$.  Since $R$ is a computable sequence of $\ell^p$, it follows that 
$\mathcal{B}^\#$ is a computable presentation of $\mathcal{B}$.   

Note that $e_{2k} + e_{2k+1} \in \mathcal{B}$ for all $k \in \N$ and that 
\[
k \in C\ \Leftrightarrow\ e_{2k} \in \mathcal{B}\ \Leftrightarrow\ e_{2k+1} \in \mathcal{B}.
\]
Note also that if $f$ is an atom of $\mathcal{B}$, then either there exists $k \not \in C$ so that 
$f$ is a nonzero scalar multiple of $e_{2k} + e_{2k+1}$ or there exists $k \in C$ so that $f$ is a nonzero scalar multiple of $e_{2k}$ or $e_{2k+1}$.  

We first claim that $C$ computes an isometric isomorphism of $\ell^p$ onto $\mathcal{B}^\#$.  For, let $\{a_n\}_{n \in \N}$ be the increasing enumeration of $\N - C$.  Let
\begin{eqnarray*}
S(3k) & = & e_{2a_k} + e_{2a_k + 1}\\
S(3k+1) & = & e_{2c_k}\\
S(3k+2) & = & e_{2c_k + 1}
\end{eqnarray*}
Thus, $S$ is a $C$-computable sequence of $\mathcal{B}^\#$.  It also follows that $\ran(S) \subseteq \mathcal{B}$ and that each vector in $\ran(R)$ belongs to the linear span of $\ran(S)$.  Thus, $\ran(S)$ is linearly dense in $\mathcal{B}$.  Since $S$ is a sequence of disjointly supported nonzero vectors, by Proposition \ref{prop:unique.linear}, there is a unique isometric isomorphism $T$ of $\ell^p$ onto $\mathcal{B}$ so that $T(e_n) = \norm{S(n)}_p^{-1} S(n)$ for all $n \in \N$.  By Theorem \ref{thm:comp.map}, $T$ is a $C$-computable map of $\ell^p$ onto $\mathcal{B}^\#$.  

Now, suppose $X \subseteq \N$ computes an isometric isomorphism $T_0$ from $\ell^p$ onto $\mathcal{B}^\#$.  We show that $X$ computes $C$ as follows.  
We first note that since $R$ is a computable sequence of $\ell^p$, $\{T_0(e_j)\}_{j \in \N}$ is an $X$-computable sequence of $\mathcal{B}^\#$.  
We also note that, by the remarks in Section \ref{sec:back::subsec:FA}, $T_0(e_j)$ is a unit atom of the subvector ordering of $\mathcal{B}$.  
Furthermore, if $f$ is a unit atom of the subvector ordering of $\mathcal{B}$, then either $f$ belongs to the subspace generated by $2^{-1/p}(e_{2n} + e_{2n+1})$ for some $n \not \in C$ or $f$ belongs to the subspace generated by $e_{2n + k}$ for some $n \in C$ and $k \leq 1$.  
Also, if $f$ is a unit atom of the subvector ordering of $\mathcal{B}$, then $T_0^{-1}(f)$ is a unit atom of $\ell^p$ and so $f$ belongs to the subspace generated by $T_0(e_j)$ for some $j \in \N$.  

Hence, given $n \in \N$, using oracle $X$ we wait until either $n$ is enumerated into $C$ or a $j \in \N$ is found so that $\min\{\sigma_0(T_0(e_j), e_{2n}), \sigma_0(T_0(e_j), e_{2n+1})\} > 0$.  In the latter case, we know that $2n, 2n+1 \in \supp(T_0(e_j))$ and so $n \not \in C$.  If $n \not \in C$, then $2^{-1/p}(e_{2n} + e_{2n+1})$ is a unit atom of the subvector ordering of $\mathcal{B}$, and so there is a $j \in \N$ so that 
$T_0(e_j)$ is a unimodular scalar multiple of $2^{-1/p}(e_{2n} + e_{2n+1})$.  For this $j$, 
$\min\{\sigma_0(T_0(e_j), e_{2n}), \sigma_0(T_0(e_j), e_{2n+1})\} > 0$.  Thus, this search procedure always terminates.

\section{Every computable copy of $\ell^p$ has a c.e. degree of isometry}\label{sec:proof.thm.1}

Suppose $p \geq 1$ is computable, and let $(\ell^p)^\#$ be a computable presentation of $\ell^p$.  
If $p = 2$, then, as mentioned in the introduction, there is a computable isometric isomorphism of $\ell^p$ onto $(\ell^p)^\#$.  So, suppose $p \neq 2$.  Let $\phi$ be a computable disintegration of $(\ell^p)^\#$, and let $S = \dom(\phi)$.  

For each $\nu \in S$, let $\epsilon(\nu) = \min\{2^{-|\nu|}, \frac{1}{2}\norm{\phi(\nu)}_p^p\}$.  
Thus, $\epsilon$ is computable.  It follows from Theorem \ref{thm:comp.partition} that there is a partition $\{C_n\}_{n \in \N}$ of $S$ into uniformly computable chains so that for every $n$ and every nonterminal $\nu \in C_n$, $C_n$ contains a child $\nu'$ of $\nu$ so that 
\[
\max_\mu \norm{\phi(\mu)}_p^p - \norm{\phi(\nu')}_p^p < \epsilon(\nu)
\]
where $\mu$ ranges over the children of $\nu$ in $S$.  Thus, each $C_n$ is almost norm-maximizing.  Let $g_n = \inf \phi[C_n]$.  

The proof of Theorem \ref{thm:main.1} uses the following lemmas.

\begin{lemma}\label{lm:compute.isom}
If $\{\norm{g_n}_p\}_{n \in \N}$ is an $X$-computable sequence of reals, then $X$ computes an isometric isomorphism of $\ell^p$ onto $(\ell^p)^\#$.  
\end{lemma}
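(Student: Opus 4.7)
The plan is to construct $T$ so that $T(e_k) = \norm{g_{n_k}}_p^{-1} g_{n_k}$, where $n_0, n_1, \ldots$ is an enumeration of the indices $n$ with $g_n \neq \mathbf{0}$. By Theorem \ref{thm:lim.chains}(\ref{thm:lim.chains::itm:unique}), every standard basis vector $e_j$ arises as the support-singleton of a unique nonzero $g_n$, so this enumeration is infinite and each $e_j$ equals a unimodular scalar multiple of one of the vectors $\norm{g_{n_k}}_p^{-1} g_{n_k}$. Proposition \ref{prop:unique.linear} then supplies a unique linear isometry $T : \ell^p \rightarrow \ell^p$ with the prescribed action on the standard basis; since $\{e_j\}_{j \in \N}$ is contained in $\ran(T)$, the range of $T$ is dense in $\ell^p$ and hence, being closed, all of $\ell^p$, so $T$ is the desired isometric isomorphism of $\ell^p$ onto $(\ell^p)^\#$.

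To make $T$ computable from $X$, I would first $X$-c.e.\ enumerate the indices $n$ with $g_n \neq \mathbf{0}$ by waiting for an $X$-computable rational lower bound on $\norm{g_n}_p$ to become positive; call this enumeration $n_0, n_1, \ldots$. The technical heart is to show that each $g_{n_k}$ is $X$-computable as a vector of $(\ell^p)^\#$, uniformly in $k$. The key observation is that since $\phi$ is summative and separating, $\phi(\nu') \preceq \phi(\nu)$ whenever $\nu \subset \nu'$ in $\dom(\phi)$, and consequently $g_n = \inf \phi[C_n]$ satisfies $g_n \preceq \phi(\nu)$ for every $\nu \in C_n$. Hence $\phi(\nu) - g_n$ and $g_n$ are disjointly supported, giving
\[
\norm{\phi(\nu) - g_n}_p^p = \norm{\phi(\nu)}_p^p - \norm{g_n}_p^p.
\]
Since $\norm{\phi(\nu)}_p$ is computable, $\norm{g_n}_p$ is $X$-computable, and $\phi(\nu) \to g_n$ in the $\ell^p$-norm along $C_n$ by Theorem \ref{thm:lim.chains}(\ref{thm:lim.chains::itm:inf}), the oracle $X$ can search $C_n$ for a node $\nu$ making the right-hand side arbitrarily small, then output a rational approximation of $\phi(\nu)$ as an approximation of $g_{n_k}$.

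With $\{\norm{g_{n_k}}_p^{-1} g_{n_k}\}_{k \in \N}$ thereby realized as an $X$-computable sequence of vectors of $(\ell^p)^\#$, Theorem \ref{thm:comp.map} yields that $T$ is an $X$-computable map of $\ell^p$ into $(\ell^p)^\#$, completing the proof. The main obstacle I foresee is the justification of $g_n \preceq \phi(\nu)$ for $\nu \in C_n$; once this is secured, the displayed identity reduces the vector-approximation task to a purely numerical $X$-computable comparison of how quickly $\norm{\phi(\nu)}_p^p$ descends to $\norm{g_n}_p^p$, after which the rest of the construction is routine bookkeeping of rational approximations.
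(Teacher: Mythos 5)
Your proposal is correct and follows essentially the same route as the paper: both arguments hinge on $g_n \preceq \phi(\nu)$ for $\nu \in C_n$, the identity $\norm{\phi(\nu) - g_n}_p^p = \norm{\phi(\nu)}_p^p - \norm{g_n}_p^p$, an $X$-effective search along $C_n$ to approximate $g_{n_k}$ as a vector of $(\ell^p)^\#$, and Theorem \ref{thm:comp.map} to conclude. The only cosmetic difference is that you invoke Proposition \ref{prop:unique.linear} together with a density-of-range argument for surjectivity, where the paper appeals to Theorem \ref{thm:classification}; both are fine here since $p \neq 2$ is in force.
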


\begin{lemma}\label{lm:compute.seq}
If $X$ computes an isometric isomorphism of $\ell^p$ onto $(\ell^p)^\#$, then $\{\norm{g_n}_p\}_{n = 0}^\infty$ is an $X$-computable sequence of reals.  
\end{lemma}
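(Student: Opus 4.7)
The plan is to compute $\norm{g_n}_p$ to arbitrary precision from $X$, uniformly in $n$, by running two semi-procedures in parallel: an upper-bound branch that produces approximations converging down to $\norm{g_n}_p$, and an exact branch that, once $g_n$ has been identified as a concrete atom, outputs its norm exactly.

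Since $T:\ell^p\to(\ell^p)^\#$ is $X$-computable and $p\neq 2$, Theorem \ref{thm:classification} tells us every $T(e_j)$ is a unit atom of $\ell^p$ and, together with their unimodular multiples, these exhaust the unit atoms of $(\ell^p)^\#$. The relativized form of Proposition \ref{prop:f*.comp} then yields an $X$-computable sequence of functionals $\{T(e_j)^*\}_{j\in\N}$ on $(\ell^p)^\#$. Because $\phi$ is a computable disintegration, the quantity $T(e_j)^*(\phi(\nu))$ and the norms appearing in inequality (\ref{ineq:recognize}) are $X$-computable uniformly in $j$ and $\nu$, so the strict inequality (\ref{ineq:recognize}) with $f=T(e_j)$ is $X$-semi-decidable.

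Given target precision $2^{-m}$, the upper-bound branch enumerates $\nu\in C_n$ in order. By Theorem \ref{thm:lim.chains}(\ref{thm:lim.chains::itm:inf}), the sequence $\norm{\phi(\nu)}_p$ decreases to $\norm{g_n}_p$, so this branch halts with output $0$ once some $\nu$ gives $\norm{\phi(\nu)}_p<2^{-(m+1)}$, which is then within $2^{-m}$ of $\norm{g_n}_p$. The exact branch, using $X$, searches for a pair $(\nu,j)$ with $\nu\in C_n$ and $f=T(e_j)$ witnessing (\ref{ineq:recognize}). When one is found, Lemma \ref{lm:recognize}(\ref{lm:recognize::itm:valid}) gives $g_n = T(e_j)^*(\phi(\nu))\,T(e_j)$, so $\norm{g_n}_p=|T(e_j)^*(\phi(\nu))|$, which is $X$-computable to any precision.

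At least one branch halts: if $g_n=\mathbf{0}$, the upper-bound branch does; if $g_n\neq\mathbf{0}$, then $g_n$ is an atom and hence a scalar multiple of the unique $T(e_j)$ whose support equals that of $g_n$, and Lemma \ref{lm:recognize}(\ref{lm:recognize::itm:exists}) applied to this $f=T(e_j)$ produces a witnessing $\nu\in C_n$. Uniformity in $n$ is automatic from the uniform computability of the partition $\{C_n\}$. The only delicate point is verifying $X$-semi-decidability of (\ref{ineq:recognize}) across the $T(e_j)$'s, which has been noted above; I foresee no serious obstacle.
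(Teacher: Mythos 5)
Your proposal is correct and is essentially the paper's own argument: the paper runs the same single search over $\nu\in C_n$ for either $\norm{\phi(\nu)}_p<2^{-k}$ or a pair $(\nu,j)$ witnessing (\ref{ineq:recognize}) with $f=T(e_j)$, uses Theorem \ref{thm:classification} plus Lemma \ref{lm:recognize}(\ref{lm:recognize::itm:exists}) for termination, and Lemma \ref{lm:recognize}(\ref{lm:recognize::itm:valid}) to conclude $g_n=T(e_j)^*(\phi(\nu))T(e_j)$. Your two parallel branches are just a restatement of that disjunctive search, and your appeal to the relativized Proposition \ref{prop:f*.comp} for the $X$-computability of the $T(e_j)^*$ is the intended justification.
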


\begin{proof}[Proof of Lemma \ref{lm:compute.isom}]
Suppose $\{\norm{g_n}_p\}_{n = 0}^\infty$ is an $X$-computable sequence of reals.  

We first claim that $\{g_n\}_{n \in \N}$ is an $X$-computable sequence of $(\ell^p)^\#$.  
For, let $n,k \in \N$ be given.  For each $\nu \in C_n$, $g_n \preceq \phi(\nu)$, and so 
$\norm{\phi(\nu) - g_n}_p^p = \norm{\phi(\nu)}_p^p - \norm{g_n}_p^p$.  Thus, 
for each $\nu \in C_n$, $X$ computes $\norm{\phi(\nu) - g_n}_p$ uniformly in $\nu,n$.  
By Theorem \ref{thm:lim.chains}.\ref{thm:lim.chains::itm:inf}, there is a $\nu \in C_n$ so that $\norm{\phi(\nu) - g_n}_p < 2^{-(k+1)}$; using oracle $X$, such a $\nu$ can be found by a search procedure.  Since $\phi$ is computable, we can additionally compute a rational vector $f$ of $(\ell^p)^\#$ so that $\norm{f - \phi(\nu)}_p < 2^{-(k+1)}$.  
Thus, we have computed a rational vector $f$ of $(\ell^p)^\#$ so that $\norm{f - g_n}_p < 2^{-k}$.  

Let $G$ denote the set of all $n \in \N$ so that $g_n$ is nonzero.  Thus, $G$ is c.e. relative to $X$.
By Theorem \ref{thm:lim.chains}.\ref{thm:lim.chains::itm:unique}, for each $j \in \N$ there is a unique $n \in G$ so that $\supp(g_n) = \{j\}$.  Thus, $G$ is infinite.  
So, $X$ computes a one-to-one enumeration $\{n_k\}_{k  \in \N}$ of $G$.  Let $h_k = \norm{g_{n_k}}_p^{-1} g_{n_k}$.  Thus, $\{h_k\}_{k \in \N}$ is an $X$-computable sequence of $(\ell^p)^\#$.  

Again, by Theorem \ref{thm:lim.chains}.\ref{thm:lim.chains::itm:unique}, for each $j \in \N$, there is a unique $k \in \N$ so that $\supp(h_k) = \{j\}$.  So, there is a permutation $\phi$ of $\N$ so that $\supp(h_k) = \{\phi(k)\}$ for each $k \in \N$.   Since $\norm{h_k}_p = 1$, it follows that there is a unimodular scalar $\lambda_k$ so that $h_k = \lambda_k e_{\phi(k)}$.  
It then follows from Theorem \ref{thm:classification} there there is a unique isometric isomorphism $T$ of $\ell^p$ so that 
$T(e_k) = h_k$ for all $k \in \N$.  By Theorem \ref{thm:comp.map}, $T$ is an $X$-computable map of $\ell^p$ onto $(\ell^p)^\#$.
\end{proof}

\begin{proof}[Proof of Lemma \ref{lm:compute.seq}:]  Set $\epsilon(\nu) = \min\{\frac{1}{2} \norm{\phi(\nu)}_p^p, 2^{-|\nu|}\}$.  
Let $n,k \in \N$ be given.  We compute a rational number $q$ so that 
$|q - \norm{g_n}_p| < 2^{-k}$ as follows.  Using oracle $X$, we search for $\nu \in C_n$ so that either $\norm{\phi(\nu)}_p < 2^{-k}$ or 
so that for some $j \in \N$
\[
\norm{\phi(\nu) - T(e_j)^*(\phi(\nu))T(e_j)}_p^p + \epsilon(\nu) < \norm{T(e_j)^*(\phi(\nu))T(e_j)}_p^p.
\]
By Theorem \ref{thm:classification}, if $g_n \neq \mathbf{0}$, then there exists $j \in \N$ so that 
$T(e_j)$ and $g_n$ have the same support and so $T(e_j)^*(g_n)T(e_j) = g_n$.  
So, by Lemma \ref{lm:recognize}.\ref{lm:recognize::itm:exists}, this search must terminate.  If $\norm{\phi(\nu)}_p < 2^{-k}$, since $g_n \preceq \phi(\nu)$, it follows that $\norm{g_n}_p < 2^{-k}$ and so we output $0$.  Otherwise, it follows from Lemma \ref{lm:recognize}.\ref{lm:recognize::itm:valid} that $T(e_j)^*(\phi(\nu))T(e_j) = g_n$.  Thus, by the relativization of Proposition \ref{prop:f*.comp}, we can use oracle $X$ to compute and output a rational number $q$ so that 
$|q - \norm{T(e_j)^*(\phi(\nu))T(e_j)}_p| < 2^{-k}$.
\end{proof}

Let $r_n = \norm{g_n}_p$.  Since $g_n \preceq \phi(\nu)$ for all $\nu \in C_n$, $r_n \leq \norm{\phi(\nu)}_p$ for all $\nu \in C_n$.  Since $g_n = \inf \phi[C_n]$, it follows from Theorem \ref{thm:lim.chains} that $r_n$ is right-c.e. uniformly in $n$.  So, by Theorem \ref{thm:compression}, there is a right-c.e. real $r$ so that the left Dedekind cut of $r$ is enumeration-equivalent to the join of the left Dedekind cuts of the $r_n$'s.  Let $D$ denote the left Dedekind cut of $r$, and let $\mathbf{d}$ denote the Turing degree of $D$.  Thus, $\mathbf{d}$ is c.e..

We claim that $\mathbf{d}$ is the degree of isometric isomorphism of $(\ell^p)^\#$.  For, since 
$\norm{g_n}_p$ is right-c.e. uniformly in $n$, $\{\norm{g_n}_p\}_{n \in \N}$ is a $D$-computable sequence.  Thus, by Lemma \ref{lm:compute.isom}, $D$ computes an isometric isomorphism of 
$\ell^p$ onto $(\ell^p)^\#$.  Conversely, suppose an oracle $X$ computes an isometric isomorphism of 
$\ell^p$ onto $(\ell^p)^\#$.  It is required to show that $X$ computes $D$.  We can assume $r$ is irrational.  By Lemma \ref{lm:compute.seq}, $X$ computes $\{r_n\}_{n \in \N}$.  
Thus, $X$ computes an enumeration of the uniform join of the left Dedekind cuts of the $r_n$'s.  
Hence, $X$ computes an enumeration of $D$.  
Since $r$ is irrational and right-c.e., it follows that $X$ computes $D$.  

\section{Conclusion}\label{sec:conclusion}

For a computable real $p \geq 1$ with $p \neq 2$, we have investigated the least powerful Turing degree that computes a surjective linear isometry of $\ell^p$ onto one of its computable presentations.  We have shown that this degree always exists, and, somewhat surprisingly, that these degrees are precisely the c.e. degrees.   Thus  computable analysis yields a characterization of the c.e. degrees.   


The isometry degree of a pair of computable copies of $\ell^p$ is an instance of a more general notion of the isomorphism degree of an isomorphic pair of computable structures which is related to the concept of a degree of categoricity.  
Since there exist computable structures for which there is no degree of categoricity, this leads to the question ``Is there a computable structure $\mathcal{A}$ for which there is no degree of computable categoricity but with the property that any two of its computable copies possess a degree of isomorphism?"

\section*{Acknowledgement}
We thank U. Andrews, R. Kuyper, S. Lempp, J. Miller, and M. Soskova for very helpful conversations during the first author's visit to the University of Wisconsin; in particular for suggesting the use of enumeration reducibility.  This visit was funded in part by a travel grant from the Simons Foundation. We also thank Diego Rojas for proofreading and making several very useful suggestions.
Research of the first author supported in part by a Simons Foundation grant \# 317870.  Research of the second author supported in part by National Science Foundation Grants 1247051 and 1545028.

\def\cprime{$'$}
\providecommand{\bysame}{\leavevmode\hbox to3em{\hrulefill}\thinspace}
\providecommand{\MR}{\relax\ifhmode\unskip\space\fi MR }
\providecommand{\MRhref}[2]{%
  \href{http://www.ams.org/mathscinet-getitem?mr=#1}{#2}
}
\providecommand{\href}[2]{#2}

\end{document}